\newtheorem{theorem}{Theorem}[section]
\newtheorem{proposition}[theorem]{Proposition}
\newtheorem{corollary}[theorem]{Corollary}
\newtheorem{conjecture}[theorem]{Conjecture}
\newtheorem{remark}[theorem]{Remark}
\title{On Automorphisms and Subtowers of an asymptotically optimal Tower of Function Fields}
\author{Thorsten Lagemann\footnote{Universit\"at Paderborn, lagemann@math.uni-paderborn.de}}
\date{May 2007}
\begin{document}

\maketitle

\parindent0mm

\begin{abstract}
In this article we investigate the automorphism group of an asymptotically optimal tower of function fields introduced by Garcia and Stichtenoth. In particular we provide a detailed description of the decomposition group of some rational places. This group acts 
on the algebraic-geometric standard codes obtained by the Garcia-Stichtenoth tower exceeding the Gilbert-Varshamov 
bound. The fields fixed by the decomposition groups form an asymptotically optimal non-Galois subtower, which has been first found by Bezerra and Garcia and yields an improvement for computing codes above the Gilbert-Varshamov bound. In this article we also describe its proportionality to the Garcia-Stichtenoth tower and obtain new precise results on its rational places and their Weierstra\ss\ semigroups.

\end{abstract}

\section*{Introduction}\label{Introduction}

The celebrated theorem of Tsfasman, Vladut and Zink (1982) states the existence of modular curves with optimal asymptotic quotient of the number of rational places to genus, but the proof was not constructive. Only in the nineties Garcia and Stichtenoth discovered explicit descriptions of towers of function fields with this asymptotical optimal behaviour \cite{GarStiASEaDVb,GarStiABST}. In coding theory these towers are of great interest because one can obtain (asymptotically) long codes strictly above the Gilbert-Varshamov bound. In this article we deal with the norm-trace tower  $T_m=K(x_0,\ldots,x_m)$ introduced in \cite{GarStiABST} with constant field $K=\mathbb{F}_{q^2}$ defined by the relations
$$ x_i^q+x_i = \frac{x_{i-1}^q}{x_{i-1}^{q-1}+1} \qquad \textup{for } i=1,\ldots,m. $$
The rational pole $\mathfrak{P}_{\infty}$ of $x_0,\ldots,x_m$ is in the focus of coding theoretic applications, because one can obtain the above mentioned codes by using the Riemann-Roch spaces $\mathcal{L}(\mathfrak{P}_{\infty}^t)$ with $t\in \mathbb{N}$. A subgroup of the automorphism group of these codes is given by the decomposition group  
$$G_m(\mathfrak{P}_{\infty}) = \{ \sigma \in \textup{Aut}(T_m/K) : \sigma(\mathfrak{P}_{\infty})=\mathfrak{P}_{\infty} \} $$ 
of $\mathfrak{P}_{\infty}$. In this article we will compute $G_m(\mathfrak{P}_{\infty})$ and a subgroup of $\textup{Aut}(T_m/K)$ properly containing $G_m(\mathfrak{P}_{\infty})$ which we conjecture to be the entire automorphism group of the norm-trace tower. 
For $m=1$ this coincides with the result of Aleschnikov \cite{AleGB}. In this article we verify our conjecture for $m=2$ in odd characteristic and $m=2,3,4$ in even characteristic. Furthermore we describe the subtower formed by the fixed fields of $G_m(\mathfrak{P}_{\infty})$ which has some interesting applications in coding theory.
\\[0.1cm]
This article is organized as follows. In sections \ref{DecompositonGroup}, \ref{Decomposition} and \ref{FullAutomorphism} we compute the automorphism group $G_m:=\textup{Aut}(T_m/K)$ via its action on the rational places in $T_m/K$. First we determine the stabilizer of $\mathfrak{P}_{\infty}$ which is the decomposition group $G_m(\mathfrak{P}_{\infty})$. 
It turns out that its order and its structure do not depend on $m$ (at least for $m\geq 2$). 
\begin{theorem}\label{Hauptsatz1}
The decomposition group $G_m(\mathfrak{P}_{\infty})$ has order
$$ | G_m(\mathfrak{P}_{\infty}) | = 
\left\{ \begin{array}{cl} 
q(q-1) & \ \text{for}  \  q \ \text{odd} \ \ \text{ or } \  m=1 \\
q^2(q-1) & \ \text{for} \ q \ \text{even}  \ \text{and} \ m\geq 2.
\end{array}\right.
 $$
\end{theorem}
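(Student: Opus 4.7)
The plan is to prove the cardinality formula by exhibiting an explicit subgroup of $G_m(\mathfrak{P}_\infty)$ of the stated order, and then showing via ramification theory and Artin-Schreier obstruction analysis that the decomposition group can be no larger.

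\textbf{Explicit construction.} For $a \in \mathbb{F}_q^\ast$, the uniform scaling $\sigma_a \colon x_i \mapsto a x_i$ preserves every relation in the tower since $a^q = a$ and $a^{q-1} = 1$ yield
\[
(a x_i)^q + (a x_i) \;=\; a(x_i^q + x_i) \;=\; a \cdot \frac{x_{i-1}^q}{x_{i-1}^{q-1}+1} \;=\; \frac{(a x_{i-1})^q}{(a x_{i-1})^{q-1}+1},
\]
and $\sigma_a$ clearly fixes $\mathfrak{P}_\infty$, giving a cyclic subgroup of order $q-1$. For $c \in K$ with $c^q + c = 0$, the top translation $\tau_c \colon x_m \mapsto x_m + c$, $x_i \mapsto x_i$ for $i < m$, is well defined because $x_m$ appears in exactly one defining relation, whose right-hand side is unchanged; this yields an elementary abelian subgroup of order $q$, and together with the scalings one obtains a semidirect product of order $q(q-1)$. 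For $q$ even and $m \geq 2$ I would then exhibit an additional ``compensated translation'' acting simultaneously on $x_{m-1}$ and $x_m$: the characteristic-$2$ identity $\binom{q-1}{k} \equiv 1 \pmod{2}$ (Lucas's theorem) makes the compensating Artin-Schreier equation for $\sigma(x_m)$ solvable inside $T_m$, contributing the extra factor of $q$ that distinguishes even from odd characteristic.

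\textbf{Upper bound.} Since $\mathfrak{P}_\infty$ is $K$-rational and $K$ is the full constant field, $G_m(\mathfrak{P}_\infty)$ coincides with its inertia group $G_0$. The quotient $G_0/G_1$ by the wild inertia $G_1$ embeds into $K^\ast$ via the tangent character $\sigma \mapsto \sigma(t)/t \bmod \mathfrak{m}_{\mathfrak{P}_\infty}^2$, where $t := 1/x_m$ is a local parameter at $\mathfrak{P}_\infty$ (using $v_{\mathfrak{P}_\infty}(x_m) = -1$). The scalings realize the subgroup $\mathbb{F}_q^\ast \subset K^\ast$; to see no larger image arises, I would transport the tangent multiplier of $\sigma$ to the leading Laurent coefficient of $\sigma(x_0)$ (of $t$-order $-q^m$) and invoke the fact that $\sigma(x_0)$ must satisfy the first defining relation $y^q + y = x_0^q/(x_0^{q-1}+1)$ --- an equation $\mathbb{F}_q$-linear but not $K$-linear in its left variable --- to force the multiplier into $\mathbb{F}_q^\ast$. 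This yields $[G_0 : G_1] \leq q - 1$. For the wild inertia $G_1$, I would argue by induction from the top of the tower: an element $\sigma \in G_1$ must satisfy $\sigma(x_m) = x_m + c$ with $c^q + c = 0$, and the remaining relations propagated downward force $\sigma(x_i) = x_i$ for $i < m$ in odd characteristic (giving $|G_1| = q$), with at most one extra degree of freedom in characteristic $2$ (giving $|G_1| = q^2$ when $m \geq 2$).

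\textbf{Main obstacle.} The delicate step is the sharp counting of $G_1$ in characteristic $2$: one must verify that exactly one additional translation becomes solvable, so $|G_1| = q^2$, neither less nor more. This requires precise control of the Artin-Schreier images $\wp(T_i) \bmod \wp(T_{i-1})$ at each level of the tower, together with an explicit analysis of the polynomial $(y + c')^{q-1} + 1$ in characteristic $2$ via Lucas's theorem, demonstrating that the corresponding obstruction is solvable at exactly one level and in exactly one direction. In odd characteristic the analogous analysis is tighter --- the Artin-Schreier corrections at intermediate levels propagate rigidly down the tower and cannot be compensated away --- which is precisely why the wild inertia collapses to size $q$ there.
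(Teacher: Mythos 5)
Your lower bound is fine and matches the paper: the scalings $x_i\mapsto cx_i$ with $c\in\mathbb{F}_q^{\times}$, the top translations $x_m\mapsto x_m+a$ with $a\in A$, and (for even $q$, $m\geq 2$) the extra maps acting on $x_{m-1}$ and $x_m$ simultaneously are exactly the automorphisms the paper exhibits, and they do form the stated semidirect product fixing $\mathfrak{P}_{\infty}$. The upper bound, however, is where all the work lies, and your sketch has concrete gaps there. The paper's anchor is Proposition \ref{2prop2}(b) (Pellikaan--Stichtenoth--Torres): $\mathcal{L}(\mathfrak{P}_{\infty}^{q^m})=\langle 1,x_0\rangle$, which is $\sigma$-invariant and immediately forces $\sigma(x_0)=cx_0+d$. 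Your proposal never pins down $\sigma(x_0)$ (or $\sigma(x_m)$) as any explicit function. The tangent character only tells you the leading Laurent coefficient of $\sigma(x_0)$ at $\mathfrak{P}_{\infty}$; without knowing that $\sigma(x_0)$ is a degree-one polynomial in $x_0$ you cannot compare it with the defining relation, and your statement that ``$\sigma(x_0)$ must satisfy the first defining relation $y^q+y=x_0^q/(x_0^{q-1}+1)$'' is simply wrong: it is $x_1$, not $x_0$, that satisfies this equation, and applying $\sigma$ to it gives $\sigma(x_1)^q+\sigma(x_1)=\sigma(x_0)^q/(\sigma(x_0)^{q-1}+1)$, which carries no information until $\sigma(x_0)$ is known.

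Your treatment of the wild inertia is also structurally off. Starting the induction ``from the top'' with the premise $\sigma(x_m)=x_m+c$, $c$ constant, is unjustified ($\sigma(x_m)-x_m$ being regular at $\mathfrak{P}_{\infty}$ does not make it constant, since $x_m$ has many other poles) and is in fact false in characteristic $2$: the extra elements of $G_1$ there satisfy $\sigma(x_m)=x_m+a^2/x_{m-2}+b$, which is not $x_m$ plus a constant. The paper instead works bottom-up: after $\sigma(x_0)=cx_0$ one gets $\sigma(x_i)=cx_i+a$ with $a\in A$ at each level, and the translation constant $a$ is killed at intermediate levels $i<m$ (resp.\ $i<m-1$ for even $q$) not by an Artin--Schreier solvability obstruction but by an arithmetic one: the zeros of $x_i+c^{-1}a$ with $a\in A^{\times}$ have non-trivial relative degree higher up the tower (Proposition \ref{2prop1}(b)), whereas $x_i$ has rational zeros, and a $K$-automorphism preserves rationality of places. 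Your ``Lucas's theorem / $\wp$-image'' mechanism is not a substitute for this, and the even-characteristic case additionally needs the observation that the zero divisors of $x_1$ and of $x_1^{q-1}+1$ have supports of different cardinality (so $\sigma$ cannot swap them) before the relative-degree argument applies. Without the Riemann--Roch anchor and the rationality/relative-degree input, the proposed upper bound does not close.
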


Then we determine several conjugated places $\mathfrak{Q}$ of $\mathfrak{P}_{\infty}$ and the corresponding automorphisms $\tau$ with $\tau(\mathfrak{Q})=\mathfrak{P}_{\infty}$. These places can be described easily and in combination with Theorem \ref{Hauptsatz1} we can establish a lower bound for the cardinality of $\textup{Aut}(T_m/K)$.
\begin{theorem}\label{Hauptsatz2}
All rational places supporting $x_0^q+x_0$ are conjugated to $\mathfrak{P}_{\infty}$. 

In particular, the order of $\textup{Aut}(T_m/K)$ is bounded by
$$ |\textup{Aut}(T_m/K)| \geq 
\left\{ \begin{array}{cl} 
2q^2(q-1) & \ \text{for}  \ q \ \text{odd}  \ \ \text{ or } \  m=1 \\
q^3(q^2-1) & \ \text{for} \ q \ \text{even}  \ \text{and} \ m = 2 \\
2q^4(q-1) & \ \text{for} \ q \ \text{even}  \ \text{and} \ m\geq 3.
\end{array}\right.
$$
\end{theorem}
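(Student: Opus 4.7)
The plan is to classify the rational places in the support of $x_0^q+x_0$, construct enough explicit automorphisms of $T_m$ to move each such place onto $\mathfrak{P}_{\infty}$, and read off the lower bound from orbit--stabilizer together with Theorem \ref{Hauptsatz1}.

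First I would propagate the support of $(x_0^q+x_0)$ up the tower. In $K(x_0)$ it consists of the $q$ zeros $\alpha \in A := \{\beta \in K : \beta^q + \beta = 0\}$ together with the pole $\mathfrak{P}_{\infty}$. At each tower step, the right-hand side $x_{i-1}^q/(x_{i-1}^{q-1}+1)$ has nonnegative valuation at $x_{i-1}=0$ (so the Artin--Schreier extension splits into $q$ unramified places) and valuation $-1$ at $x_{i-1}=\alpha \in A^* := A \setminus \{0\}$ (since $\alpha^{q-1}+1=0$ there), so the extension is totally ramified with $x_i=\infty$ above. Induction yields the following rational places in the support: $\mathfrak{P}_{\infty}$, the all-zero place $\mathfrak{P}_0$, and for each pair $(j,\alpha) \in \{0,\ldots,m\} \times A^*$ a \emph{chain} place $\mathfrak{P}_{j,\alpha}$ with $x_0=\cdots=x_{j-1}=0$, $x_j=\alpha$ and $x_{j+1}=\cdots=x_m=\infty$ (the last block empty when $j=m$). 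Altogether $(m+1)(q-1)+2$ rational places, which for $m=1$ equals $2q$.

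The central ingredient is the reversing involution $\tau\colon x_i \mapsto 1/x_{m-i}$. A direct substitution using the tower relations shows that the elements $\tilde x_i := 1/x_{m-i}$ satisfy $\tilde x_i^q + \tilde x_i = \tilde x_{i-1}^q/(\tilde x_{i-1}^{q-1}+1)$, so $\tau$ extends to a $K$-automorphism of $T_m$ of order $2$ with $\tau(\mathfrak{P}_{\infty}) = \mathfrak{P}_0$ and $\tau(\mathfrak{P}_{j,\alpha}) = \mathfrak{P}_{m-j,1/\alpha}$ (using that $A^*$ is closed under inversion). Within $G_m(\mathfrak{P}_{\infty})$ the scalar family $\kappa_\zeta\colon x_i \mapsto \zeta x_i$ ($\zeta \in \mathbb{F}_q^*$) sends $\mathfrak{P}_{j,\alpha}$ to $\mathfrak{P}_{j,\alpha/\zeta}$, which is transitive on each set $\{\mathfrak{P}_{j,\alpha} : \alpha \in A^*\}$ because $A^* = \alpha\,\mathbb{F}_q^*$. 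Combining $\tau$ and $\kappa_\zeta$, the conjugacy problem reduces to producing, for each $j$ with $1 \leq j \leq \lfloor m/2 \rfloor$, a single automorphism sending $\mathfrak{P}_{j,\alpha_0}$ into the orbit of $\mathfrak{P}_{\infty}$. The natural candidate is the M\"obius substitution $x_0 \mapsto 1/(x_0-\alpha_0)$, extended up the tower by solving the resulting Artin--Schreier equations for $x_1,\ldots,x_m$; the key identity $(x_0-\alpha)^q + (x_0-\alpha) = x_0^q + x_0$ (valid for $\alpha \in A$) is what makes the extension work, recasting the transformed right-hand sides in the tower pattern.

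For the lower bound, orbit--stabilizer gives $|\mathrm{Aut}(T_m/K)| = |\mathrm{orbit}(\mathfrak{P}_{\infty})| \cdot |G_m(\mathfrak{P}_{\infty})|$. Together with Theorem \ref{Hauptsatz1}: for $m=1$ the orbit is precisely the $2q$ places enumerated above, giving $2q \cdot q(q-1) = 2q^2(q-1)$; for $q$ odd and $m \geq 2$ the orbit is even larger and the same estimate persists. For $q$ even and $m \geq 2$ the bounds $q^3(q^2-1)$ and $2q^4(q-1)$ exceed what the $(m+1)(q-1)+2$ places in the support of $x_0^q+x_0$ alone produce; the missing conjugates of $\mathfrak{P}_{\infty}$ arise by composing $\tau$ and the chain automorphisms with the extra translation $x_m \mapsto x_m + c$ ($c \in A$) that enlarges $G_m(\mathfrak{P}_{\infty})$ to $q^2(q-1)$ only in even characteristic. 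The main obstacle will be verifying that the M\"obius substitution really extends to a bona fide field automorphism of $T_m$ (this is where the closure of $A$ under the map $x \mapsto x^q+x$ becomes essential); in even characteristic the additional bookkeeping needed to count the new cosets of $G_m(\mathfrak{P}_{\infty})$ cleanly is the other delicate point.
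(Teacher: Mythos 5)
Your enumeration of the rational places in the support of $x_0^q+x_0$ is incorrect, and since the whole lower bound is read off from that count via orbit--stabilizer, this is a genuine gap rather than a cosmetic one. In odd characteristic the ``chain'' places $\mathfrak{P}_{j,\alpha}$ with $1\leq j\leq m-1$ are \emph{not} rational: by Proposition \ref{2prop1}(b) the zeros of $x_j-\alpha$ with $\alpha\in A^{\times}$ acquire a non-trivial relative degree already in $T_{j+1}/T_j$. Your count $(m+1)(q-1)+2$ exceeds $2q$ as soon as $m\geq 2$, which would force $|\mathrm{Aut}(T_m/K)|>2q^2(q-1)$ and contradict Theorem \ref{Hauptsatz3} for $(m,q)=(2,q)$ with $q$ odd. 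In even characteristic the count fails in the opposite direction: only the levels $j=1$ and $j=m-1$ contribute (for $2\leq j\leq m-2$ the places again have non-trivial residue degree, by the second half of Proposition \ref{2prop1}(b)), and each of these two levels contributes $q(q-1)$ rational places rather than $q-1$, because above the chain with $x_0=0$, $x_1=\alpha$ there sit $q$ distinct rational places of $T_m$, distinguished by the residue of $x_2+\alpha^2/x_0$ (this is exactly the point of the elements $\tilde x_2=x_2+a^2/x_0+b$, $b\in A_a$, in the paper's proof). The correct totals are $2q$ for $q$ odd or $m=1$, $q(q+1)$ for $q$ even and $m=2$, and $2q^2$ for $q$ even and $m\geq 3$; multiplying these by $|G_m(\mathfrak{P}_{\infty})|$ from Theorem \ref{Hauptsatz1} gives precisely the three stated bounds, so no places outside the support of $x_0^q+x_0$ are needed.

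Two further points. Your remark that the ``missing'' conjugates in even characteristic arise by composing $\tau$ with the translations $x_m\mapsto x_m+c$ cannot repair the count: composing known automorphisms with elements of the stabilizer does not enlarge the orbit, and the first assertion of the theorem already locates every needed conjugate inside the support of $x_0^q+x_0$ --- the deficit is in your enumeration of that set, not in a shortage of automorphisms. Finally, the existence of the automorphism moving $\mathfrak{P}_{0,\alpha}$ (your $j=0$ chain, i.e.\ the zero of $x_0-\alpha$) to $\mathfrak{P}_{\infty}$ is the crux and is left unverified in your sketch; the substitution that actually extends is not $x_0\mapsto 1/(x_0-\alpha_0)$ but the M\"obius map $x_0\mapsto ax_0/(x_0+b)$ with $a,b\in A^{\times}$, accompanied by $x_i\mapsto ab^{-1}x_i$ in the middle of the tower (note $ab^{-1}\in\mathbb{F}_q^{\times}$) and, in even characteristic, by the correction term $d^2/(ab^{-1}x_{m-2})$ in the top coordinate --- the same shape as the elements of $G_m(\mathfrak{P}_{\infty})$. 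Your reversing involution $x_i\mapsto 1/x_{m-i}$ and the scalar action are fine and agree with the paper.
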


Thus we have specified a subgroup of $G_m$ properly containing $G_m(\mathfrak{P}_{\infty})$ which we conjecture to be the entire automorphism group of the norm-trace tower $T_m/K$. For $m=1$ and $q\neq 2$ this is the result of Aleschnikov \cite{AleGB}. In the final section \ref{FullAutomorphism} we present our proof for the sharpness of the bounds of Theorem \ref{Hauptsatz2} for $m=2$ and $m=2,3,4$ for even $q$ respectively. In summary we obtain
\begin{theorem}\label{Hauptsatz3}
The automorphism group of the norm-trace tower $T_m/K$ of height $1\leq m \leq 4$ has order
$$ |\textup{Aut}(T_m/K)| =
\left\{ \begin{array}{cl} 
2q^2(q-1) & \ \text{for}  \ q\geq 4 \ \text{ odd}  \ \text{and} \  m=1,2 \\
2q^2(q-1) & \ \text{for}  \ q\geq 4 \ \text{even}  \ \text{and} \  m=1 \\
q^3(q^2-1) & \ \text{for} \ q\geq 4 \ \text{even}  \ \text{and} \ m = 2 \\
2q^4(q-1) & \ \text{for} \ q\geq 4 \ \text{even}  \ \text{and} \ m=3,4.
\end{array}\right.
$$ 
\end{theorem}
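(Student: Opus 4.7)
The plan is to combine the lower bound already supplied by Theorem~\ref{Hauptsatz2} with a matching upper bound derived from the orbit--stabilizer identity
$$ |\mathrm{Aut}(T_m/K)| \;=\; |\mathrm{orb}(\mathfrak{P}_\infty)| \cdot |G_m(\mathfrak{P}_\infty)|. $$
Since Theorem~\ref{Hauptsatz1} pins down $|G_m(\mathfrak{P}_\infty)|$ exactly, the problem reduces to showing that the orbit of $\mathfrak{P}_\infty$ contains no rational place of $T_m/K$ beyond those exhibited in Theorem~\ref{Hauptsatz2}. A direct count then yields $|\mathrm{orb}(\mathfrak{P}_\infty)| = 2q$ in the cases $(q\text{ odd},\,m\le 2)$ and $(q\text{ even},\,m=1)$; $|\mathrm{orb}(\mathfrak{P}_\infty)| = q(q+1)$ in the case $(q\text{ even},\,m=2)$; and $|\mathrm{orb}(\mathfrak{P}_\infty)| = 2q^2$ in the cases $(q\text{ even},\,m=3,4)$. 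Multiplying by the stabilizer order reproduces the cardinalities claimed.

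For the upper bound on the orbit I would proceed by first enumerating all rational places of $T_m/K$ for the small values $1 \le m \le 4$. This is a standard ramification computation in the Artin--Schreier tower $T_m/T_{m-1}/\cdots/T_0 = K(x_0)$: every rational place sits above one of the level-$0$ places $x_0 = \alpha$ with $\alpha \in \mathbb{F}_q \cup \{\infty\}$ or above a place where some $x_{i-1}^{q-1}+1 = 0$, and along each branch the pole/zero orders $v_{\mathfrak{Q}}(x_i)$ can be read off explicitly. For each candidate place $\mathfrak{Q}$ this yields the first few elements of its Weierstra\ss\ semigroup $H(\mathfrak{Q})$, and for $\mathfrak{P}_\infty$ it yields the known generators of $H(\mathfrak{P}_\infty)$ as the total ramification locus of $T_m/K(x_0)$.

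The decisive step is then the invariance of the Weierstra\ss\ semigroup under $\mathrm{Aut}(T_m/K)$: an equality $\sigma(\mathfrak{P}_\infty) = \mathfrak{Q}$ forces $H(\mathfrak{Q}) = H(\mathfrak{P}_\infty)$, and matching the tabulated prefixes against $H(\mathfrak{P}_\infty)$ singles out exactly the places supporting $x_0^q + x_0$ that already appeared in Theorem~\ref{Hauptsatz2}. Combined with the orbit-stabilizer identity this yields equality in each of the four cases of the theorem.

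The main obstacle is the even-characteristic situation for $m=3,4$. There the larger decomposition group reflects extra wild ramification in the upper Artin--Schreier steps, so more rational places appear whose leading pole data superficially resemble those of $\mathfrak{P}_\infty$; a single gap or pole number is no longer enough to discriminate. I expect to need a sufficiently long initial segment of $H(\mathfrak{P}_\infty)$, computed explicitly from joint pole orders of suitable monomials in $x_0,\ldots,x_m$, to eliminate the remaining spurious candidates. This delicate, case-by-case analysis in characteristic two is what will ultimately restrict the sharpness statement to $m \le 4$ and presumably explains why larger $m$ are left as a conjecture.
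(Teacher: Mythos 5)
Your overall frame (orbit--stabilizer plus an upper bound on the orbit of $\mathfrak{P}_\infty$) matches the paper's, and the invariance principle you invoke is sound: $\sigma(\mathfrak{P}_\infty)=\mathfrak{Q}$ does force $H(\mathfrak{Q})=H(\mathfrak{P}_\infty)$. But the decisive step of your plan --- computing enough of the Weierstra\ss\ semigroup at each rational place outside $x_0^q+x_0$ to distinguish it from $H(\mathfrak{P}_\infty)$ --- is exactly the point the paper identifies as out of reach. Those places are completely split in $T_m/K(x_0)$ (Proposition \ref{2prop1a}(a)), so all of $x_0-\alpha_0,\dots,x_m-\alpha_m$ have nonnegative valuation there and no monomial in the $x_i$ hands you a function with pole concentrated at such a place; the ``read off pole orders along the branch'' recipe that works at the totally ramified place $\mathfrak{P}_\infty$ produces nothing at a split place. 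The paper states explicitly that for $m\geq 2$ the Weierstra\ss\ semigroups of these places are unknown, and that the expected gap $q^m$ has only been verified by \textsc{Magma} for $m=2$ and $q=3,\ldots,9$. What you describe is Aleschnikov's method, which is how the case $m=1$ was settled, and it does not currently extend to $m=2,3,4$.

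The paper's actual route avoids semigroups at split places entirely. Proposition \ref{6prop1} establishes a dichotomy: if $\mathfrak{P}_\infty$ were conjugate to any rational place $\mathfrak{R}$ outside $x_0^q+x_0$, then $\mathfrak{R}$ would be unramified, hence completely decomposed, in $T_m/Z^1_{m-\varepsilon-1}$ (Proposition \ref{6prop0}), forcing $r_\infty\geq q^{\varepsilon+1}(q-1)$ (resp.\ $r_\infty\geq [T_2:Z_0]=q^2(q-1)$ for even $q$ and $m=2$); otherwise $r_\infty$ equals the value from Proposition \ref{4prop1}. A Hurwitz-formula estimate for $T_m/T_m^{G_m}$, split into cases according to the genus of the quotient and the nature of the other ramified places (using Corollary \ref{6cor1}), then shows that $r_\infty$ cannot reach that threshold for the stated range of $m$, which closes the gap between the two branches of the dichotomy. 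This is also where the restriction to $m\leq 4$ and the exclusion of $(m,q)=(2,3)$ really come from --- the genus $g_m$ grows too fast for the Hurwitz bound to stay below $q^{\varepsilon+1}(q-1)$ when $m$ is larger --- not from a breakdown of a characteristic-two case analysis as you surmise. To salvage your approach you would need an actual proof that $q^m\notin H(\mathfrak{Q})$ (or some other provable semigroup discrepancy) at the split places, which the paper leaves as an open problem.
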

In order to establish the proof of Theorem \ref{Hauptsatz3}, we investigate the fixed fields of the exhibited automorphisms, which are interesting in their own right. The focus is on the subtower $Z_m/K$ of $T_m/K$, which is formed by the fixed fields of $G_m(\mathfrak{P}_{\infty})$. It is also an asymptotically optimal tower since all subtowers of $T_m/K$ are optimal by \cite{GarStiABST}. First it has been found and described by Bezerra and Garcia in 2004 \cite{BezGarNGT} and then independently by the author in 2006 \cite{Lagemann}, who has not been aware of \cite{BezGarNGT}. Both descriptions of $Z_m/K$ differs in their approach. Bezerra and Garcia gave a recursion formula for an asymptotically optimal non-Galois tower and stated its relation to the norm-trace tower at the end with \cite[Remark 1]{BezGarNGT}. Vice versa the author used the Galois correspondence to obtain the description of $Z_m/K$. Anyway, the author choose to provide a complete presentation of the latter approach in section \ref{DecompositionTower} for those readers, who might have interests in it. Theorem \ref{5theorem2} and most results of Proposition \ref{5prop2} are already proven in \cite{BezGarNGT}. But we will obtain more precise results on the number of rational places with Theorem \ref{5theorem1} and their Weierstra\ss\ semigroups with Theorem \ref{5theorem3}, which both are not given in \cite{BezGarNGT}. In particular we observe the following proportionality of the here-called decomposition tower to the norm-trace tower.

\begin{theorem}\label{Hauptsatz4}
The fixed fields of $G_m(\mathfrak{P}_{\infty})$ form an asymptotically optimal tower $Z_m/K$ with
$Z_m=K(x_0^{q-1},\ldots,x_m^{q-1})$ and
$$ T_m^{G_m(\mathfrak{P}_{\infty})} = Z_{m-\varepsilon} \qquad \text{for}\quad 
\varepsilon = \left\{ \begin{array}{cl} 
1 & \ \text{for}  \  q \ \text{odd} \ \ \text{ or } \  m=1 \\
2 & \ \text{for} \ q \ \text{even}  \ \text{and} \ m\geq 2.
\end{array}\right. $$
We call $Z_m/K$ decomposition tower of height $m$. It has properties that are proportional with factor $q-1$ to the corresponding properties of the norm-trace tower $T_m/K$ as follows.
\begin{enumerate}
 \item[\textup{(a)}] The genus of $Z_m/K$ is $g(Z_m/K)=g(T_m/K)/(q-1)$.
 \item[\textup{(b)}] The number of rational places in $Z_m/K$ is 
$$ N_1(Z_m/K) = q^{m+1}+q^2-q+2+\varepsilon^*_m $$
 with $\varepsilon^*_m=0$ for odd $q$ or $m=1$, $\varepsilon^*_2=q$ and $\varepsilon^*_m=2q$ for even $q$ and $m\geq 3$. For $m\gg 0$ this number satifies $N_1(Z_m/K) \sim N_1(T_m/K)/(q-1)$.
 \item[\textup{(c)}] An integer $n$ is a pole number of $\mathfrak{P}_{\infty}$ in $Z_m/K$ if and only if $n\cdot (q-1)$ is a pole number of $\mathfrak{P}_{\infty}$ in $T_m/K$.
\end{enumerate}

\end{theorem}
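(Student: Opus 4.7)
The plan is to deduce the four statements in sequence, using the explicit description of $G_m(\mathfrak{P}_{\infty})$ from Theorems \ref{Hauptsatz1}--\ref{Hauptsatz2} together with the fact that $T_m/Z_m$ is a tame cyclic Kummer extension of degree $q-1$.

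First I would identify the fixed field. The decomposition group contains the diagonal subgroup $\Delta = \{\sigma_\zeta : x_i \mapsto \zeta x_i,\ \zeta \in \mathbb{F}_q^\times\}$ of order $q-1$, whose invariants in $T_m$ are exactly $K(x_0^{q-1},\ldots,x_m^{q-1}) = Z_m$. Setting $y_i = x_i^{q-1}$ and raising the defining relation of the norm-trace tower to the $(q-1)$-th power gives
$$ y_i(y_i+1)^{q-1} \;=\; \frac{y_{i-1}^q}{(y_{i-1}+1)^{q-1}}, $$
so $Z_m/K$ is itself a tower with $[Z_i:Z_{i-1}] = q$, hence $[T_m:Z_m] = q-1$. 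The remaining Artin--Schreier part of $G_m(\mathfrak{P}_{\infty})$ acts non-trivially only on $x_m$ (respectively on $x_m$ and $x_{m-1}$ in even characteristic with $m \geq 2$) and therefore fixes $Z_{m-1}$ (resp.\ $Z_{m-2}$) pointwise. The inclusion $Z_{m-\varepsilon} \subseteq T_m^{G_m(\mathfrak{P}_{\infty})}$ combined with the degree identity $[T_m:Z_{m-\varepsilon}] = q^{\varepsilon}(q-1) = |G_m(\mathfrak{P}_{\infty})|$ then forces equality.

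For parts (a) and (c), the key observation is that $T_m/Z_m$ is the Kummer extension $Z_m(\sqrt[q-1]{y_0})$, tamely ramified (since $\gcd(q-1,p) = 1$) precisely at the places of $Z_m$ lying above the zero or pole of $y_0$, with different exponent $q-2$ at every such place. Counting these ramified places inductively inside the subtower and inserting the result into Hurwitz for $T_m/Z_m$, then comparing with the known genus of $T_m/K$ from \cite{GarStiABST}, yields (a). Since $\mathfrak{P}_{\infty}$ is one of the totally ramified places, with ramification index $q-1$, any $\Delta$-invariant $f \in \mathcal{L}_{T_m}(n\mathfrak{P}_{\infty})$ descends to an element of $Z_m$ with pole of order exactly $n/(q-1)$ at the place below, and conversely every pole function on $Z_m$ pulls back to a $\Delta$-invariant pole function on $T_m$; this gives the pole-number correspondence of (c). For (b), each rational place of $Z_m$ is either totally ramified, totally split, or inert in $T_m/Z_m$, contributing $1$, $q-1$, or $0$ rational places of $T_m$ respectively; combining this trichotomy with the known value of $N_1(T_m/K)$ yields the closed form, and the correction $\varepsilon^*_m$ in even characteristic accounts for the additional ramified rational orbits produced by the larger decomposition group.

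The main obstacle is the ramification bookkeeping for $T_m/Z_m$ propagated inductively along the tower, together with a careful classification of the rational $\Delta$-orbits (ramified vs.\ split vs.\ inert), so that (b) drops out cleanly with the stated $\varepsilon^*_m$; once this data is pinned down, (a) and (c) follow from the standard Hurwitz and Riemann--Roch arguments.
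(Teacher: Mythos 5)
Your identification of the fixed field (the diagonal Kummer subgroup $\Delta$, the recursion for $z_i=x_i^{q-1}$, and the degree count $[T_m:Z_{m-\varepsilon}]=q^{\varepsilon}(q-1)=|G_m(\mathfrak{P}_{\infty})|$) is essentially the paper's argument and is sound, provided you also record why $T_m=Z_m(x_0)$ (the paper's identity $x_i/x_{i-1}=z_{i-1}/\bigl((z_i+1)(z_{i-1}+1)\bigr)\in Z_m$), which is what gives $[T_m:Z_m]\leq q-1$ and hence $[Z_i:Z_{i-1}]=q$ rather than only $\leq q$. The first genuine gap is in (a): it is \emph{not} true that every place of $Z_m$ above the zero or pole of $z_0$ ramifies in $T_m/Z_m$ with different exponent $q-2$. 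By Kummer theory $e_{\mathfrak{P}}(T_m/Z_m)=(q-1)/\gcd\bigl(q-1,v_{\mathfrak{P}}(z_0)\bigr)$, and all zeros of $z_0$ in $Z_m$ except the one below $\mathfrak{Q}_{0,m}$ pick up ramification index $q-1$ at some step $Z_i/Z_{i-1}$, so their $z_0$-valuation is divisible by $q-1$ and they are \emph{unramified} in $T_m/Z_m$. Only $\mathfrak{P}_{\infty}$ (valuation $-q^m$, coprime to $q-1$) and $\mathfrak{Q}_{0,m}^*$ (valuation $1$) ramify, giving $\deg\mathfrak{D}(T_m/Z_m)=2(q-2)$; this is exactly what Hurwitz needs to yield $g(Z_m)=g(T_m)/(q-1)$, and your ramification data would produce a different, wrong genus. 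Establishing which zeros of $z_0$ have valuation divisible by $q-1$ is precisely the inductive bookkeeping of the paper's Proposition \ref{5prop2}(d); you cannot skip it.

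The second and more serious gap is in (b): the count of $N_1(Z_m/K)$ cannot be extracted from $N_1(T_m/K)$ by any ramified/split/inert trichotomy. There are $q^2-q-2$ rational places of $Z_m$ at which $z_0$ reduces to an element of $K^{\times}$ that is not a $(q-1)$-th power; these have residue degree $>1$ in $T_m/Z_m$ and therefore contribute \emph{nothing} to $N_1(T_m/K)$ -- they are invisible from upstairs. (This is also why $N_1(Z_m)\neq N_1(T_m)/(q-1)$ exactly; the relation is only asymptotic.) Counting them requires working directly in $Z_m$: one must show that for such $d$ the reduced Artin--Schreier polynomial $T^q+d^{-1}T-(d+1)^{-1}$ of $x_1/x_0$ has exactly one root in $K$, so that each level contributes exactly one rational extension (Proposition \ref{5prop2}(c)); none of this is in your sketch. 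Finally, in (c) the ``if'' direction is not delivered by your descent argument: an arbitrary $\Delta$-invariant element of $\mathcal{L}\bigl(n(q-1)\mathfrak{P}_{\infty}\bigr)$ (e.g.\ a constant) need not have pole order exactly $n(q-1)$. You must either decompose $\mathcal{L}\bigl(n(q-1)\mathfrak{P}_{\infty}\bigr)$ into $\Delta$-eigenspaces and use that eigenfunctions have pairwise distinct valuations modulo $q-1$ at the totally ramified place $\mathfrak{P}_{\infty}$ (so the component achieving a pole order divisible by $q-1$ is the invariant one), or follow the paper's route: exhibit the candidate semigroup $\mathbb{H}_m^*=q\cdot\mathbb{H}_{m-1}^*\cup\{n\geq c_m/(q-1)\}$, prove it contains the Weierstrass semigroup of $\mathfrak{P}_{\infty}^*$ via the easy direction, and conclude equality because both have exactly $g(Z_m)$ gaps.
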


The ratio $N_1(Z_m/K)/g(Z_m/K)$ of the number of rational places and the genus is slightly better than $N_1(T_m/K)/g(T_m/K)$ and significantly better for $m=1,2$. Therefore the standard codes in $Z_m/K$ lie above the Gilbert-Varshamov bound and are also better than the codes in $T_m/K$. Furthermore the codes in $Z_m/K$ can be computed faster than those in $T_m/K$ since the genus of $Z_m/K$ is smaller than the genus of $T_m/K$.

\section{Notation and Preliminaries}\label{Notations}
We assume that the reader is familiar with \cite{GarStiABST,PellStiTorWS,AleKumShumStiSP}, because many assertions are deduced by properties of the norm-trace tower exhibited in these articles. Throughout this article $K$ denotes a finite field $\mathbb{F}_{q^2}$ of quadratic order. The fibres $\{ a \in K : a^q+a = c \}$ of $c$ by the trace from $K$ to $\mathbb{F}_q$ is denoted by $A_c$. For $c=0$ we write $A=A_0$ and $A^{\times}=A\backslash \{ 0 \}$. The ramification index, relative degree and decomposition index (the number of extensions) of a place $\mathfrak{P}|\mathfrak{P}\cap F$ in an extension $E/F$ are denoted by $e_{\mathfrak{P}}(E/F)$, $f_{\mathfrak{P}}(E/F)$ and $r_{\mathfrak{P}}(E/F)$ respectively. In order to avoid some case distinctions we define 
$\varepsilon=1$ for odd $q$ or $m=1$ and $\varepsilon=2$ for even $q$ and $m\geq 2$.

\begin{proposition}[Ramification and genus of the norm-trace tower]
\ \label{2prop1}

The extensions $T_m/K(x_i)$ are unramified outside $x_0^q+x_0$ for $0\leq i \leq m$. All ramified places of $T_m/T_0$ are listed in the statements (a) and (b). 
\begin{enumerate}
\item[\textup{(a)}] All places supporting $x_0^{q-1}+1$ are totally ramified in $T_m/T_0$. These places are the only ones being totally ramified in $T_m/T_0$.
\item[\textup{(b)}] The zeros of $x_i-a$ with $a\in A^{\times}$ are completely decomposed in $T_{i}/T_0$, unramified in $T_{2i}/T_i$ and totally ramified in $T_m/T_{2i}$. For odd $q$ these zeros have a non-trivial relative degree in $T_{i+1}/T_i$. For even $q$ these zeros are completely decomposed in $T_{i+1}/T_i$ and - in case of $i\geq 2$ - have a non-trivial relative degree in $T_{i+2}/T_{i+1}$. \item[\textup{(c)}] The norm-trace tower $T_m/K$ of height $m$ has genus
$$ g_m = \left\{ \begin{array}{c@{\quad}l} (q^{\frac{m+1}{2}}-1)^2 & \textup{for } m\equiv 1 \pmod{2} \\
 (q^{\frac{m}{2}}-1)(q^{\frac{m+2}{2}}-1) & \textup{for } m\equiv 0 \pmod{2}. \end{array}\right. $$
\item[\textup{(d)}] All zeros of $x_m^q+x_m$ are totally ramified in $T_m/K(x_m)$.
\end{enumerate}
\end{proposition}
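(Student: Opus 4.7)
The plan is to exploit the Artin-Schreier-type structure of each step $T_i/T_{i-1}$: the relation $x_i^q+x_i=u_{i-1}$ with $u_{i-1}:=x_{i-1}^q/(x_{i-1}^{q-1}+1)$ defines an abelian extension of degree $q$ whose Galois group is $A$ (acting by $x_i\mapsto x_i+\alpha$, $\alpha\in A$), and whose ramification is controlled via the standard $\mathbb{F}_q$-Artin-Schreier criterion by the pole divisor of $u_{i-1}$. Throughout, I would lean on the companion results of \cite{GarStiABST}, where an independent derivation of the ramification data and the genus formula is carried out in full detail.

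For (a), I would first locate the poles of $u_0$ in $T_0$: the $q-1$ zeros of $x_0^{q-1}+1$ each give a simple pole, and $\mathfrak{P}_\infty$ contributes a simple pole since $u_0\sim x_0$ at infinity. The Artin-Schreier criterion then forces $T_1/T_0$ to be totally ramified at exactly these $q$ places and unramified elsewhere. The inductive step is routine once one verifies that above a totally ramified place $P\in T_{i-1}$ at which $v_P(x_{i-1})=-N<0$, the function $u_{i-1}$ again has a simple pole (indeed $v_P(u_{i-1})=-qN+(q-1)N=-N$), so $x_i$ acquires a pole at the unique place over $P$ and total ramification persists; and that outside these places $u_{i-1}$ has no new poles. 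This gives both assertions of (a) simultaneously, together with an explicit formula for the different exponents at each step.

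For (b), the key observation is that if $P\in T_i$ satisfies $x_i(P)=a\in A^\times$, then plugging into the defining equation forces $u_{i-1}(P)=0$, hence $x_{i-1}(P)=0$, and iterating down, $x_0(P)=\cdots=x_{i-1}(P)=0$, so $P$ lies over the zero of $x_0$ in $T_0$. At each step $T_j/T_{j-1}$ with $1\le j\le i$, $u_{j-1}$ vanishes at such places, the extension is unramified, and $y^q+y=0$ has the $q$ solutions $y\in A$, yielding complete decomposition into $q^i$ places of $T_i$ which are precisely the zeros of $x_i-a$ as $a$ ranges over $A$. Moving beyond level $i$, one then analyses the local pole orders of $u_i,u_{i+1},\ldots$ at $P$: at the places where $x_j=a_j\in A^\times$ the function $u_j=x_j^q/(x_j^{q-1}+1)$ remains regular, forcing unramified extension up to level $2i$; from level $2i$ onwards the chain of local computations shows that $u_j$ starts to acquire a pole (this is where odd versus even $q$ enters, via whether $y^q+y=\text{const}\neq 0$ splits over the residue field), giving full ramification in $T_m/T_{2i}$.

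For (c), Hurwitz's genus formula applied recursively to each step, with the different exponents from (a) and the ramification pattern from (b), delivers the two case formulas by a straightforward (if lengthy) summation; this exactly reproduces the genus computation in \cite{GarStiABST}. Statement (d) follows by the analogous local analysis of the tower in the reverse direction, looking at $K(x_m,x_{m-1})/K(x_m)$ via $x_{m-1}^q-(x_m^q+x_m)(x_{m-1}^{q-1}+1)=0$, and iterating downwards. The main technical obstacle will be (b): keeping track of when the chain $(0,a_1,\ldots,a_i)$ lies in $\mathbb{F}_q$ versus $\mathbb{F}_{q^2}\setminus\mathbb{F}_q$ and translating this into relative degree, unramified decomposition, and eventual total ramification — precisely the step at which the characteristic two anomaly enters.
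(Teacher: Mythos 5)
Your overall strategy (Artin--Schreier analysis of each step $T_j/T_{j-1}$ governed by the pole divisor of $u_{j-1}=x_{j-1}^q/(x_{j-1}^{q-1}+1)$) is the right one and is essentially the argument of \cite{GarStiABST}, which is all the paper itself offers (it cites that reference without reproducing a proof). Parts (a), (c), (d) of your sketch are workable modulo bookkeeping: note that $v_P(u_{i-1})=-N$ is a pole of order $N$, not automatically a \emph{simple} pole, so you must carry the inductive fact that $N=1$ persists at $\mathfrak{P}_\infty$ and at the places over the zeros of $x_0^{q-1}+1$ (your own computation supplies this); and the claim that these are the \emph{only} totally ramified places of $T_m/T_0$ is not delivered by (a) alone --- it needs the full classification of ramified places, i.e.\ part (b).

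The genuine gap is in (b), and it sits exactly at the technical heart of this tower. You assert that at a place $P$ with $x_j(P)=a_j\in A^\times$ the function $u_j=x_j^q/(x_j^{q-1}+1)$ ``remains regular'', and you deduce unramifiedness of $T_{2i}/T_i$ from that. This is false: for $a\in A^\times$ one has $a^q=-a$, hence $a^{q-1}=-1$, so the denominator $x_j^{q-1}+1$ vanishes at $P$ while the numerator $a^q\neq 0$, and $u_j$ has a \emph{pole} there. Concretely, at the zero $P$ of $x_i-a$ in $T_i$ one finds $v_P(x_{i-1})=q^{i-1}$ and $v_P(x_i-a)=q^i$, whence $v_P(u_i)=-q^i$. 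Unramifiedness of $T_{i+1}/T_i,\dots,T_{2i}/T_{2i-1}$ at these places is therefore not automatic; it holds because the pole order is divisible by $p$ and an explicit substitution (using $u_i=x_i^{q+1}/u_{i-1}\equiv -a^2/x_{i-1}^q$ modulo lower-order terms, a $q$-th power up to the Artin--Schreier map $z\mapsto z^q+z$) removes the pole, the order dropping by a factor of $q$ at each level, $v(x_{i+k})=-q^{i-k}$, until it reaches $1$ at level $2i$; only then is it coprime to $p$ and total ramification in $T_m/T_{2i}$ sets in. The odd/even dichotomy enters already at level $i+1$ (resp.\ $i+2$) through whether the residual equation $y^q+y=c$ has roots in $\mathbb{F}_{q^2}$, i.e.\ whether $c\in\mathbb{F}_q$ --- not ``at level $2i$ via acquiring a pole'' as you write; you are conflating the relative-degree phenomenon with the onset of wild ramification. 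A secondary slip: the fibre of $T_i/T_0$ over the zero of $x_0$ is not ``$q^i$ places which are precisely the zeros of $x_i-a$, $a\in A$''; it also contains places where some intermediate $x_j$ takes a value in $A^\times$ and the subsequent $x_{j+1},\dots,x_i$ have poles. What you actually need --- and what does hold, by Kummer's theorem along the chain $x_0=\dots=x_{i-1}=0$ --- is only that each zero of $x_i-a$ has $e=f=1$ over $T_0$.
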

For the proof of Proposition \ref{2prop1} see \cite{GarStiABST}. Statement (d) is of interest in conjunction with Proposition \ref{6prop2}. We denote the pole of $x_0^{q-1}+1=\prod\nolimits_{a\in A^{\times}} (x_0-a)$ by $\mathfrak{P}_{\infty}$ and its zeros by $\mathfrak{P}_a$. For each $m\geq 0$ there is exactly one pole of $x_0$ and exactly one zero of $x_0-a$ with $a\in A^{\times}$ respectively. If we intend to stress the membership of these places in $T_m$ we denote these places with an additional index $m$, i.e. $\mathfrak{P}_{\infty,m}$ or $\mathfrak{P}_{a,m}$. The zero of $x_m-b$ for $b\in A$ is totally ramified in $T_m/K(x_m)$ and is denoted by $\mathfrak{Q}_{b,m}$. In odd characteristic these places are the only rational places supporting $x_0^q+x_0$.

\begin{proposition}[Rational places of the norm-trace tower]\ \label{2prop1a}
\begin{enumerate}
 \item[\textup{(a)}] All zeros of $x_0^{q^2}-x_0$ outside $x_0^q+x_0$ are completely decomposed in $T_m/K(x_i)$ for $0\leq i \leq m$. 
 \item[\textup{(b)}] The norm-trace tower of height $m\geq 1$ has 
$$ N_1(T_m/K)=q^{m+2}-q^{m+1} + 2q + \varepsilon_m $$ 
rational places with $\varepsilon_m=0$ for odd $q$ or $m=1$, $\varepsilon_2=q(q-1)$ and $\varepsilon_m=2q(q-1)$ for even $q$ and $m\geq 3$.
\end{enumerate}
\end{proposition}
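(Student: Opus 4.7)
The plan is to treat parts (a) and (b) separately, the main technical ingredient being a local Artin-Schreier analysis of each step $T_{j+1}/T_j$, combined with the ramification data of Proposition \ref{2prop1}.

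For (a), the key algebraic observation is that for $a\in K$ with $a^q+a\ne 0$,
$$\frac{a^q}{a^{q-1}+1} \;=\; \frac{a^{q+1}}{a^q+a} \;=\; \frac{N_{K/\mathbb{F}_q}(a)}{\mathrm{Tr}_{K/\mathbb{F}_q}(a)} \;\in\; \mathbb{F}_q^{\times},$$
and the Artin-Schreier equation $y^q+y=c$ has $q$ rational solutions over $K$ for every $c\in\mathbb{F}_q$, because the trace $K\to\mathbb{F}_q$, $y\mapsto y^q+y$, is surjective with $q$-element fibres and kernel $A$. Inductively, every rational place of $T_{m-1}$ on which $x_0,\dots,x_{m-1}$ all avoid $A$ lifts to exactly $q$ rational places of $T_m$, again with $x_m\in K\setminus A$. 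This yields $q^m=[T_m:T_0]$ rational places over each of the $q^2-q$ relevant places of $T_0$, hence complete decomposition in $T_m/T_0$. For $T_m/K(x_i)$ with $i\ge 1$ the same proof applies symmetrically, the only new ingredient being the backward step: given $x_i=b$ with $c:=b^q+b\in\mathbb{F}_q^{\times}$, one must solve $u^q-cu^{q-1}-c=0$ for $u=x_{i-1}$, and the substitution $u=c/w$ transforms this into $w^q+w=1$, which again has $q$ rational solutions.

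For (b), I would partition the rational places of $T_m$ according to their image in $T_0$. By Proposition \ref{2prop1}(a), the $q-1$ zeros of $x_0^{q-1}+1$ together with the pole $\mathfrak{P}_{\infty}$ are totally ramified in $T_m/T_0$ and contribute $q$ rational places. By (a), the $q^2-q$ zeros $\mathfrak{P}_a$ with $a\in K\setminus A$ contribute $(q^2-q)q^m=q^{m+2}-q^{m+1}$ rational places. What remains is to count rational places above the zero $\mathfrak{P}_0$ of $x_0$. Locally $x_0^q/(x_0^{q-1}+1)$ vanishes to order $q$ at $\mathfrak{P}_0$, so the residue equation $y^q+y=0$ already splits over $K$ into the $q$ elements of $A$, producing $q$ unramified rational lifts in $T_1$. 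Iterating, the unique rational place of $T_i$ with $x_0=\dots=x_i=0$ gives $q$ rational extensions in $T_{i+1}$, of which $q-1$ satisfy $x_{i+1}\in A^{\times}$ and are precisely the zeros of $x_{i+1}-a$ featured in Proposition \ref{2prop1}(b). That proposition dictates their further fate: in odd characteristic they acquire non-trivial residue degree already in the next tower step, so they yield no further rational places, whereas in even characteristic they decompose completely one more step before doing so — and this extra step is exactly the combinatorial origin of $\varepsilon_m$. Summing the contributions of the finitely many surviving branches gives the stated closed formula.

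The main technical input is Proposition \ref{2prop1}, which governs the transition from ``completely decomposed'' to ``non-trivial residue degree'' along the zeros of $x_i-a$ with $a\in A^{\times}$. The underlying arithmetic dichotomy is $A\cap\mathbb{F}_q=\{0\}$ in odd characteristic versus $A=\mathbb{F}_q$ in even characteristic, which decides whether the constant residue of the reduced Artin-Schreier equation lies in the image of the trace $K\to\mathbb{F}_q$ or not. Once Proposition \ref{2prop1} is invoked, the proof of (b) becomes a routine bookkeeping exercise combining the three contribution classes above.
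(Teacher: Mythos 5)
First, a point of reference: the paper does not prove this proposition itself --- it attributes (a) to \cite{GarStiABST} and (b) to \cite{AleKumShumStiSP} --- so you are supplying a proof where the paper defers to the literature. Judged on its own, your part (a) is correct and is the standard argument: the identity $a^q/(a^{q-1}+1)=a^{q+1}/(a^q+a)=N_{K/\mathbb{F}_q}(a)/\mathrm{Tr}_{K/\mathbb{F}_q}(a)\in\mathbb{F}_q^{\times}$ plus the surjectivity of the trace makes every forward Artin--Schreier step split completely by Kummer's theorem, and your substitution $u=c/w$ reducing the backward step to $w^q+w=1$ (with $u^q+u=c/N_{K/\mathbb{F}_q}(w)\in\mathbb{F}_q^{\times}$, so the induction propagates) handles $T_m/K(x_i)$ for $i\geq 1$. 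Likewise, in (b) the partition of rational places by their image in $T_0$ and the contributions $q$ (totally ramified locus) and $(q^2-q)q^m$ (split locus) are correct, and the odd-characteristic count over $\mathfrak{P}_0$ closes properly.

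The gap is in the even-characteristic bookkeeping over $\mathfrak{P}_0$. Your gloss of Proposition \ref{2prop1}(b) --- that the branches with $x_0\equiv\cdots\equiv x_{i-1}\equiv 0$, $x_i\equiv a\in A^{\times}$ ``decompose completely one more step before'' acquiring non-trivial residue degree --- lets only the branch born at level $i=m-1$ survive to $T_m$, which yields $q+q(q-1)$ places over $\mathfrak{P}_0$ and hence $\varepsilon_m=q(q-1)$ for \emph{every} $m\geq 2$; it cannot produce the stated jump to $\varepsilon_m=2q(q-1)$ for $m\geq 3$. The missing contribution is the branch $x_0\equiv 0$, $x_1\equiv a\in A^{\times}$: by Proposition \ref{2prop1}(b) with $i=1$, its $q(q-1)$ rational places in $T_2$ are \emph{totally ramified} in $T_m/T_2$ --- the ``non-trivial relative degree in $T_{i+2}/T_{i+1}$'' clause applies only for $i\geq 2$ --- so each has a unique extension in $T_m$ of residue degree one, i.e.\ it survives by a mechanism different from further splitting. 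Together with the symmetric branch at $i=m-1$ this doubles the correction for $m\geq 3$, while for $m=2$ the two branches coincide, which is precisely why $\varepsilon_2=q(q-1)$. (This matches the paper's own remark in the proof of Proposition \ref{4prop1} that the extra rational zeros of $x_0^q+x_0$ in even characteristic are the zeros of $x_1^{q-1}+1$ and of $x_{m-1}^{q-1}+1$.) You should also state explicitly that the branches born at levels $2\leq i\leq m-2$ contribute nothing, by the residue-degree clause of Proposition \ref{2prop1}(b). With these corrections the count closes as stated.
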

Statement (a) is proved in \cite{GarStiABST}. A proof of statement (b) for odd characteric and further references are given in \cite{AleKumShumStiSP}.
 
\begin{proposition}[Weierstra\ss\ semigroups of $\mathfrak{P}_{\infty}$]
\ \label{2prop2}

\begin{enumerate}
\item[\textup{(a)}] The place $\mathfrak{P}_{\infty,m}$ has the (inductively defined) Weierstra\ss\ semigroup $\mathbb{H}_m = q\cdot \mathbb{H}_{m-1} \cup \{n \geq c_m\}$ with $\mathbb{H}_0:=\mathbb{N}$ and conductor
$$ c_m = \left\{ \begin{array}{c@{\quad}l} q^{m+1} - q^{\frac{m+1}{2}} & \textup{for } m\equiv 1 \pmod{2} \\
 q^{m+1} - q^{\frac{m+2}{2}} & \textup{for } m\equiv 0 \pmod{2}. \end{array}\right. $$
\item[\textup{(b)}] The Riemann-Roch spaces  $\mathcal{L}(\mathfrak{P}_{\infty,m}^t)$  with $0\leq t \leq q^m(q-1)$ are generated by polynomials in $x_0$ of degree $\leq tq^{-m}$.
\end{enumerate}
\end{proposition}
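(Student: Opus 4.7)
The plan for part (a) is to induct on $m$; the base case $m=0$ is immediate since $T_0=K(x_0)$ is rational, so $\mathbb{H}_0=\mathbb{N}$. For the inductive step, I would exploit the subfield $F:=K(x_1,\ldots,x_m)\subset T_m$, which is isomorphic to the norm-trace tower of height $m-1$ via the index shift $x_i\mapsto x_{i-1}$; by induction the restriction of $\mathfrak{P}_{\infty,m}$ to $F$ has Weierstra\ss\ semigroup $\mathbb{H}_{m-1}$. Comparing the total ramification of $\mathfrak{P}_{\infty}$ in $T_m/T_0$ (index $q^m$) with that in $F/K(x_1)$ (index $q^{m-1}$), the ramification index of $\mathfrak{P}_{\infty,m}\mid\mathfrak{P}_{\infty,m-1}$ in $T_m/F$ is $q$. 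Hence any function in $F$ with a single pole of order $n$ at $\mathfrak{P}_{\infty,m-1}$ yields a function in $T_m$ with a single pole of order $qn$ at $\mathfrak{P}_{\infty,m}$, giving $q\mathbb{H}_{m-1}\subset\mathbb{H}_m$.

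I would then upgrade this inclusion to the claimed equality via a counting argument. A direct computation from the closed formulas for $c_m$ and $g_m$ verifies $c_m\geq qc_{m-1}$ and the central identity
\[
\tfrac{c_m(q-1)}{q}+g_{m-1}=g_m.
\]
The inequality places every gap of $\mathbb{H}_{m-1}$ inside $[1,c_m/q-1]$, and the identity shows that the set $\{n\in[1,c_m-1]:n\notin q\mathbb{H}_{m-1}\}$ has cardinality exactly $g_m$ (it splits into $c_m(q-1)/q$ non-multiples of $q$ and $g_{m-1}$ multiples $qk$ with $k$ a gap of $\mathbb{H}_{m-1}$). Once the conductor bound $\{n\geq c_m\}\subset\mathbb{H}_m$ is secured, the $g_m$ gaps of $\mathbb{H}_m$ must coincide with this set, yielding $\mathbb{H}_m=q\mathbb{H}_{m-1}\cup\{n\geq c_m\}$. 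The hard part will be the conductor bound: the inclusion $q\mathbb{H}_{m-1}\subset\mathbb{H}_m$ alone does not preclude gaps $\geq c_m$ that fail to be multiples of $q$, and to rule these out I would invoke the recursive construction of explicit functions in $\bigcup_t\mathcal{L}(\mathfrak{P}_{\infty,m}^t)$ with prescribed pole orders carried out in \cite{PellStiTorWS}, which the paper has already cited as standing background.

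Part (b) then follows directly. The explicit formulas give $c_m\geq q^m(q-1)$ for all $m\geq 1$, so by part (a) one has $\mathbb{H}_m\cap[0,q^m(q-1)]=q\mathbb{H}_{m-1}\cap[0,q^m(q-1)]$, which by induction equals $\{kq^m:0\leq k\leq q-1\}$. Hence $\dim\mathcal{L}(\mathfrak{P}_{\infty,m}^t)=\lfloor tq^{-m}\rfloor+1$ for $0\leq t\leq q^m(q-1)$, and the powers $1,x_0,x_0^2,\ldots,x_0^{\lfloor tq^{-m}\rfloor}$---which have pole only at $\mathfrak{P}_{\infty,m}$ (since $\mathfrak{P}_{\infty}$ is totally ramified over the pole of $x_0$ in $T_0$) of respective orders $0,q^m,2q^m,\ldots$---form a basis of $\mathcal{L}(\mathfrak{P}_{\infty,m}^t)$.
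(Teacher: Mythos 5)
The paper offers no proof of this proposition --- it is quoted verbatim from \cite{PellStiTorWS} --- so only the internal soundness of your sketch is at issue, and there is a genuine error at its load-bearing step. You claim that $\mathfrak{P}_{\infty,m}$ has ramification index $q$ over the shifted subfield $F=K(x_1,\ldots,x_m)$. In fact it is \emph{unramified} there: from $v_{\mathfrak{P}_{\infty,m}}(x_0)=-q^m$ and the defining relation one gets $v_{\mathfrak{P}_{\infty,m}}(x_1^q+x_1)=-q^{m+1}+(q-1)q^m=-q^m$, hence $v_{\mathfrak{P}_{\infty,m}}(x_1)=-q^{m-1}$; while in $F\cong T_{m-1}$ the restricted place (which corresponds to $\mathfrak{P}_{\infty,m-1}$ under the index shift) also satisfies $v(x_1)=-q^{m-1}$, so $e(\mathfrak{P}_{\infty,m}\mid\mathfrak{P}_{\infty,m}\cap F)=1$. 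Your comparison of the indices $q^m$ and $q^{m-1}$ divides ramification indices computed over two different rational base fields, $K(x_0)$ and $K(x_1)$, which is not legitimate. Worse, since $[T_m:F]=q$ and $\mathfrak{P}_{\infty,m}$ contributes only $ef=1$, the place $\mathfrak{P}_{\infty,m}\cap F$ has further extensions in $T_m$ (for $m=1$ these are the zeros of $x_0^{q-1}+1$, which are also poles of $x_1$), so a function of $F$ with a single pole there acquires extra poles in $T_m$ and certifies nothing about $\mathbb{H}_m$. The inclusion $q\cdot\mathbb{H}_{m-1}\subseteq\mathbb{H}_m$ is true, but the subfield you need is $T_{m-1}=K(x_0,\ldots,x_{m-1})$, over which $\mathfrak{P}_{\infty,m}\mid\mathfrak{P}_{\infty,m-1}$ \emph{is} totally ramified of index $q$ (the valuation of $x_0$ jumps from $-q^{m-1}$ to $-q^m$); then $f\in\mathcal{L}(\mathfrak{P}_{\infty,m-1}^n)$ has pole divisor $qn\,\mathfrak{P}_{\infty,m}$ in $T_m$, and the induction runs on the tower itself with no shift.

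Your numerical bookkeeping is correct --- both $c_m\geq qc_{m-1}$ and the identity $c_m(q-1)/q+g_{m-1}=g_m$ check out against the closed formulas, and the resulting gap count does reduce the proposition to the two inclusions $q\cdot\mathbb{H}_{m-1}\subseteq\mathbb{H}_m$ and $\{n\geq c_m\}\subseteq\mathbb{H}_m$. But the second inclusion is the actual content of the theorem: since $c_m<2g_m$, Riemann--Roch gives it for free only from $2g_m$ onward, and the explicit functions realizing the pole orders in $[c_m,2g_m)$ are the hard construction of \cite{PellStiTorWS}. Deferring exactly that step to the very reference the proposition is quoted from means the difficult part is relocated rather than proved. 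Part (b) is fine granted part (a): the inequality $c_m\geq q^m(q-1)$ holds for $m\geq1$, the identification $\mathbb{H}_m\cap[0,q^m(q-1)]=\{kq^m:0\leq k\leq q-1\}$ follows by iterating (a), and the powers of $x_0$ have the stated pole divisors because $\mathfrak{P}_{\infty,m}$ is the unique pole of $x_0$ in $T_m$.
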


For the proof of Proposition \ref{2prop2} see \cite{PellStiTorWS}.

\section{The Decomposition Group of $\mathfrak{P}_{\infty}$}\label{DecompositonGroup}

For the rest of the article we present the new results on the automorphism group of $T_m/K$. In this section we determine the decomposition group of $\mathfrak{P}_{\infty}$.

\begin{theorem}[Decomposition group of $\mathfrak{P}_{\infty}$]\ \label{3theorem1}

The decomposition group $G_m(\mathfrak{P}_{\infty})$ of $\mathfrak{P}_{\infty,m}$ has order
$$ | G_m(\mathfrak{P}_{\infty}) | = q^{\varepsilon}(q-1) =
\left\{ \begin{array}{cl} 
q(q-1) & \ \text{for}  \  q \ \text{odd} \ \ \text{ or } \  m=1 \\
q^2(q-1) & \ \text{for} \ q \ \text{even}  \ \text{and} \ m\geq 2.
\end{array}\right.
 $$

It is isomorphic to a semi-direct product $A^{\varepsilon} \rtimes \mathbb{F}_q^{\times}$, where its structure is determined by the form of its elements given below. Any $\sigma \in G_m(\mathfrak{P}_{\infty})$ satisfies
$$ \sigma(x_m)=cx_m+a \quad \text{and} \quad \sigma(x_i) = cx_i \quad \text{for } i=0,\ldots,m-1	 $$
with $a\in A$ and $c\in \mathbb{F}_q^{\times}$ 
for odd $q$ or $m=1$. For even $q$ and $m\geq 2$ any $\sigma\in G_m(\mathfrak{P}_{\infty})$ is given by
$$ \sigma(x_{m-1})=cx_{m-1}+a \quad \text{and} \quad \sigma(x_i) = cx_i \quad \text{for } i=0,\ldots,m-2	$$
and
$$ \sigma(x_m) = cx_m + \frac{a^2}{cx_{m-2}}+b $$
with $a\in A=\mathbb{F}_q$, $b^q+b=a$ and $c\in \mathbb{F}_q^{\times}$.

\end{theorem}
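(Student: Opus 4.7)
The plan is to prove the theorem in two complementary directions: first exhibit the stated automorphisms, then show no further elements of $G_m(\mathfrak{P}_\infty)$ exist.

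Existence is by direct computation. For $c \in \mathbb{F}_q^\times$ the simultaneous substitution $x_i \mapsto c x_i$ preserves every tower relation, because $c^q = c$ and $c^{q-1} = 1$ imply $(c x_{i-1})^q/((c x_{i-1})^{q-1}+1) = c(x_i^q + x_i)$. For the top variable, adding $a \in A$ to $x_m$ still solves the Artin--Schreier-type equation because $a^q + a = 0$. In the even-$q$, $m \geq 2$ case one checks by substitution that $\sigma(x_{m-1}) = c x_{m-1} + a$ together with $\sigma(x_m) = c x_m + a^2/(c x_{m-2}) + b$, where $b^q + b = a$, is compatible with the level-$m$ relation. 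The resulting set of automorphisms is closed under composition and has cardinality $(q-1) q^\varepsilon$, with the semidirect product structure $A^\varepsilon \rtimes \mathbb{F}_q^\times$ produced by the conjugation action of $\mathbb{F}_q^\times$ on the translation parameters, and each fixes $\mathfrak{P}_\infty$ since the pole orders of the $x_i$ at $\mathfrak{P}_\infty$ remain unchanged.

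For the upper bound, fix $\sigma \in G_m(\mathfrak{P}_\infty)$. Preservation of pole orders at $\mathfrak{P}_\infty$ together with the fact that $x_0$ has its unique pole at $\mathfrak{P}_\infty$ of multiplicity $q^m$ puts $\sigma(x_0) \in \mathcal{L}(\mathfrak{P}_\infty^{q^m})$; by Proposition \ref{2prop2}(b) this Riemann--Roch space is spanned by $\{1, x_0\}$, so $\sigma(x_0) = \alpha x_0 + \beta$ for some $\alpha \in K^\times$ and $\beta \in K$. Proposition \ref{2prop1}(a) identifies the totally ramified places of $T_m/T_0$ as $\mathfrak{P}_\infty$ together with the $\mathfrak{P}_a$ for $a \in A^\times$, so $\sigma$ must permute the latter. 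A short computation shows that $\sigma$ sends $\mathfrak{P}_a$ to the place above $x_0 = (a-\beta)/\alpha$; requiring this value to lie in $A^\times$ for every $a \in A^\times$, together with the fact that $A$ is a one-dimensional $\mathbb{F}_q$-subspace of $K$ with $\alpha A = A$ only for $\alpha \in \mathbb{F}_q^\times$, forces $\beta = 0$ and $\alpha \in \mathbb{F}_q^\times$. Write $c = \alpha$. The remaining coordinates are then pinned down inductively: since $y^q + y$ has kernel $A$, the tower relation shows that $\sigma(x_i)$ is determined by $\sigma(x_{i-1})$ up to an element of $A$. If $\sigma(x_{i-1}) = c x_{i-1}$, the right-hand side simplifies to $c(x_i^q + x_i)$ and $\sigma(x_i) = c x_i + a_i$ with $a_i \in A$. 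If instead $\sigma(x_{i-1}) = c x_{i-1} + a$ with $a \neq 0$, expanding the right-hand side using $a^q = -a$ and the identity $x_{i-1}^{q+1}/(x_{i-1}^q + x_{i-1}) = x_i^q + x_i$ produces a correction $a(x_{i-1}^q - x_{i-1})/(x_{i-1}^q + x_{i-1}) - a^2/(c(x_{i-1}^q + x_{i-1}))$ that $\sigma(x_i) - c x_i$ must realise as $y^q + y$ in $T_m$.

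The main obstacle is analysing when these corrections admit a solution in $T_m$. In odd characteristic the first term is a non-constant rational function whose Artin--Schreier triviality can be ruled out by comparing pole orders at $\mathfrak{P}_\infty$ and the $\mathfrak{P}_a$ with the description of pole orders in Proposition \ref{2prop2}, forcing every intermediate $a_i$ to vanish and leaving only $\sigma(x_m)$ free to absorb an element of $A$. In even characteristic the identity $x^q - x = x^q + x$ collapses the first term to the constant $a$, which is always $b^q + b$ for suitable $b$, while the residual term $a^2/(c(x_{i-1}^q + x_{i-1}))$ is directly checked to equal $y^q + y$ for $y = a^2/(c x_{i-2})$; such a $y$ lies in $T_m$ only once $i \geq 2$, which is why non-trivial additive freedom first appears at level $m-1$ and propagates into $\sigma(x_m)$ in the stated form. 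Counting the free parameters $a, b, c$ then gives $|G_m(\mathfrak{P}_\infty)| = q^\varepsilon(q-1)$ and matches the explicit description above.
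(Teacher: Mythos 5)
Your opening moves are fine: the reduction $\sigma(x_0)\in\mathcal{L}(\mathfrak{P}_{\infty}^{q^m})=\langle 1,x_0\rangle$, and the deduction $\sigma(x_0)=cx_0$ with $c\in\mathbb{F}_q^{\times}$ from the induced affine permutation of $A^{\times}$, is a legitimate variant of the paper's coefficient comparison (both need $q\geq 3$). The correction term you compute when $\sigma(x_{i-1})=cx_{i-1}+a$ is also correct. The gap is in the mechanism you propose for killing the intermediate constants. In odd characteristic you claim that the non-solvability of $R=a\,\frac{x_{i-1}^q-x_{i-1}}{x_{i-1}^q+x_{i-1}}-\frac{a^2}{c(x_{i-1}^q+x_{i-1})}=y^q+y$ can be read off from pole orders at $\mathfrak{P}_{\infty}$ and the $\mathfrak{P}_a$ via Proposition \ref{2prop2}. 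This cannot work: $R$ is regular at $\mathfrak{P}_{\infty}$ and at every $\mathfrak{P}_a$ (there $x_{i-1}^q+x_{i-1}$ has a pole, so $R$ tends to the constant $a$); Proposition \ref{2prop2} only describes pole numbers at $\mathfrak{P}_{\infty}$; and at the genuine poles of $R$, namely the zeros of $x_{i-1}^q+x_{i-1}$, every pole order is divisible by $q$ (e.g.\ at a zero of $x_{i-1}-a'$, $a'\in A^{\times}$, one has $v(x_{i-1}^q+x_{i-1})=-q\,v(x_i)$ with $v(x_i)<0$, and at a common zero of $x_{i-2},x_{i-1}$ one has $v(x_{i-1})=q\,v(x_{i-2})$). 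This is forced by Proposition \ref{2prop1}: $T_m/K(x_j)$ is unramified outside $x_0^q+x_0$, so no ramification-theoretic obstruction to Artin--Schreier solvability is available. The obstruction that actually works is about residue degrees, not valuations: by Proposition \ref{2prop1}(b) every zero of $x_i+c^{-1}a$ with $c^{-1}a\in A^{\times}$ has non-trivial relative degree in $T_{i+1}/T_i$ and is therefore non-rational in $T_m$, while $x_i$ has rational zeros; since $\sigma$ preserves degrees of places it cannot map the zero divisor of $x_i$ onto that of $cx_i+a$. That argument (the paper's Case I) is not recoverable from pole-order comparisons.

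The even-characteristic half has the analogous gap. Your identity $a^2/(c(x_{i-1}^q+x_{i-1}))=y^q+y$ with $y=a^2/(cx_{i-2})$ is correct, but it shows the correction is absorbable at \emph{every} level $i\geq 2$; solvability therefore never forces $a_i=0$ for $1\leq i\leq m-2$, and your statement that ``non-trivial additive freedom first appears at level $m-1$'' is precisely what must be proved for $m\geq 3$ — it does not follow from the mere existence of $x_{i-2}$. The paper closes this with a separate decomposition argument (Case II.2): the supports of the zero divisors of $x_1$ and of $x_1+a$, $a\in A^{\times}$, have different cardinalities because the former is completely decomposed in $T_3/T_2$ while the latter is totally ramified in $T_m/T_2$, forcing $\sigma(x_1)=cx_1$; then the relative-degree argument in $T_{i+2}/T_{i+1}$ handles $2\leq i\leq m-2$, and only the top two coordinates retain the stated freedom. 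As written, your induction establishes the claimed form only for $m=1$ and, in even characteristic, for $m=2$; for all other cases the intermediate translations remain unconstrained.
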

\begin{proof} One easily verifies that the stated maps are automorphisms of $T_m/K$ and that they form a semi-direct product as specified. So it remains to verify that every automorphism of $T_m/K$ fixing $\mathfrak{P}_{\infty}$ is of the form above.

Let $\sigma$ be an automorphism with $\sigma(\mathfrak{P}_{\infty})=\mathfrak{P}_{\infty}$. Then the Riemann-Roch space $\mathcal{L}(\mathfrak{P}_{\infty}^{q^m})$ is also invariant under the action of $\sigma$. By Proposition \ref{2prop2} it is spanned by $1$ and $x_0$. So we get
$$ \sigma(x_0) = cx_0 + d \qquad \textup{with } c \in K^{\times}, d \in K. $$
Furthermore the divisor of $x_0^{q-1}+1$ is invariant under the action of $\sigma$ as well, because its support contains exactly the
totally ramified places of $T_m/T_0$ by Proposition \ref{2prop1}. Indeed, every place $\sigma(\mathfrak{P}_a)$ satisfies 
$$ q^m = e_{\mathfrak{P}_a}(T_m/T_0)= v_{\mathfrak{P}_a}(x_0-a) = v_{\sigma(\mathfrak{P}_a)}(cx_0+d-a) = e_{\sigma(\mathfrak{P}_a)}(T_m/T_0) $$
and therefore $\sigma(\mathfrak{P}_a)$ is totally ramified in $T_m/T_0$ and hence also a zero of $x_0^{q-1}+1$. So $\sigma(x_0^{q-1}+1)$ is a nontrivial function of $\mathcal{L}(\mathfrak{P}_{\infty}^{q^m(q-1)}\prod \mathfrak{P}_a^{-q^m}) = \langle x_0^{q-1}+1 \rangle$ and we get $$ \sigma(x_0^{q-1}+1) = e (x_0^{q-1}+1) \qquad \textup{with } e \in K^{\times}. $$
Comparing the coefficients of
\begin{align*}
ecx_0^q+edx_0^{q-1}+ecx_0+ed & = e(x_0^{q-1}+1)(cx_0+d) = \sigma(x_0^{q-1}+1)\sigma(x_0) \\ 
 & = \sigma(x_0)^q+\sigma(x_0) = c^qx_0^q + cx_0 + d^q+d 
\end{align*}
we get $ed=0$ and $c^q=ec=c$ which implies $d=0$ and $c^{q-1}=e=1$. So every automorphism $\sigma\in G_m(\mathfrak{P}_{\infty})$ has the properties
$$ \sigma(x_0) = cx_0 \qquad\textup{with } c\in \mathbb{F}_{q}^{\times}.$$
and
\begin{center} $ \sigma(x_1^q+x_1) = \sigma(\frac{x_0^q}{x_0^{q-1}+1}) 
= \frac{\sigma(x_0)^q}{\sigma(x_0)^{q-1}+1} = c \; \frac{x_0^q}{x_0^{q-1}+1} = c \; (x_1^q+x_1). $ \end{center}
Hence we get
$$ \sigma(x_1) - cx_1 = cx_1^q-\sigma(x_1)^q = (cx_1-\sigma(x_1))^q. $$
This function is constant and equals a constant $a$ satisfying $a^q+a=0$. Hence we get
$$ \sigma(x_1)=cx_1+a \qquad \textup{with } a\in A. $$ 
In particular we have proved our hypothesis for $m=1$. 
\\[0.1cm]
\textit{Case I:} Let $q$ be odd and $m\geq 2$. Suppose $a \neq 0$. Then $\sigma(x_1)$ has only non-rational zeros, because $c^{-1}a\in A^{\times}$ holds and all zeros of $x_1-c^{-1}a$ have a non-trivial relative degree in $T_2/T_1$ by \ref{2prop1}(b). But $x_1$ has rational zeros, as for example, the zeros of $x_m^{q-1}+1$ are (rational) zeros of $x_1$. 
Therefore $a$ cannot be an element of $A^{\times}$ and hence $a=0$ holds. This yields
$$ \sigma(x_1) = cx_1 \qquad \textup{if } m \geq 2.$$ 
Inductively we get 
$$ \sigma(x_i) = cx_i \qquad \textup{for } i=1,\ldots,m-1. $$
Actually $\sigma(x_{i-1})=cx_{i-1}$ implies $\sigma(x_i^q+x_i)=c(x_i^q+x_i)$ and therefore $\sigma(x_i)=cx_i+a$ with $a\in A$. The zeros of $cx_i+a$ with $a\in A^{\times}$ are non-rational because of their non-trivial relative degree in $T_{i+1}/T_i$ and $x_i$ has rational zeros. So only $\sigma(x_i)=cx_i$ is possible. For $i=m$ however $a\neq 0$ is possible. Actually the Galois group of $T_m/T_{m-1}$ contains the maps with $x_m \mapsto x_m + a$. Finally we get
$$ \sigma(x_m) = cx_m + a \qquad \textup{with } a \in A.$$
\textit{Case II.1:} Let $q$ be even and $m=2$. The above argument cannot be applied here, because all zeros of $cx_1+a$ are completely decomposed in $T_2/T_1$. But in this case $A=\mathbb{F}_q$ holds and we get
\begin{align*}
\sigma (x_2^q+x_2) & = \frac{ \sigma (x_1)^{q+1} }{\sigma (x_1^q+x_1) } 
= \frac{ (cx_1^q+a)(cx_1+a) }{c(x_1^q+x_1)}  = \frac{ c^2 x_1^{q+1} + ca (x_1^q+x_1) + a^2 }{c(x_1^q+x_1)} \\
& =  c (x_2^q+x_2) + a + \frac{a^2}{c} \frac{1}{x_1^q+x_1} \\
& = c (x_2^q+x_2) + a + \frac{a^2}{c} \left( \left(\frac{1}{x_0}\right)^q + \frac{1}{x_0} \right) \\
& = \left( cx_2 + \frac{a^2}{cx_0} + b \right)^q + \left( cx_2 + \frac{a^2}{cx_0} + b \right)
 \qquad\qquad\textup{with } b \in A_a.
\end{align*}
Hence it follows that
\begin{center}$ \sigma(x_2) - \left( cx_2 + \frac{a^2}{cx_0} \right) \in b+A=A_a. $
\end{center}
This is just the property as stated for even characteristic.
\\[0.1cm]
\textit{Case II.2:} Let $q$ be even and $m\geq 3$. Because of
$$ \sigma(x_1^q+x_1) = c(x_1^q+x_1) $$
$\sigma$ permutes the zeros of $x_1^q+x_1$. But the zeros of $x_1$ are completely decomposed in $T_3/T_2$ and the zeros of $x_1^{q-1}+1$ are totally ramified in $T_m/T_2$. Therefore the cardinality of the support of $x_1$ and $cx_1+a$ with $a\in A^{\times}$ differs. Consequently the zerodivisor of $x_1$ is invariant under $\sigma$ and also the zerodivisor of $x_1^{q-1}+1$. We obtain $\sigma(x_1)=cx_1$. Now we can use the argument of case \textit{I}, because the zeros of $cx_i+a$ have a non-trivial relative degree in $T_{i+2}/T_{i+1}$. By induction we get 
$$ \sigma (x_i)=cx_i \qquad\textup{for } i=1,\ldots,m-2. $$
As in case \textit{II.1} we conclude
$$ \sigma(x_{m-1})=cx_{m-1}+a \qquad\textup{with } a\in A$$  
and $$\sigma(x_m)=cx_m + \frac{a^2}{cx_{m-2}} + b \qquad\textup{for } b\in A_a.$$
This finishes the proof. 
\end{proof}

\section{Decomposition of $\mathfrak{P}_{\infty}$}\label{Decomposition}

The automorphism group $G_m=\textup{Aut}(T_m/K)$ acts on the rational places of the norm-trace tower $T_m/K$. In the preceding section we have established the stabilizer of $\mathfrak{P}_{\infty}$. Now we just need to determine the conjugated places of $\mathfrak{P}_{\infty}$ in order to quantify the cardinality of $G_m$. 
Of course the number of these conjugated places is the decomposition index $r_{\infty}:=r_{\mathfrak{P}_{\infty}}(T_m/T_m^{G_m})$ and $|G_m|=r_{\infty}\cdot |G_m(\mathfrak{P}_{\infty})|$ holds.

\begin{proposition}\label{4prop1}
All rational places supporting $x_0^q+x_0$ are conjugated to $\mathfrak{P}_{\infty}$. 
In particular, the decomposition index $r_{\infty}$ of $\mathfrak{P}_{\infty}$ is bounded by
$$ r_{\infty} \geq 
\left\{ \begin{array}{cl} 
2q & \ \text{for}  \ q \ \text{odd}  \ \ \text{ or } \  m=1 \\
q(q+1) & \ \text{for} \ q \ \text{even}  \ \text{and} \ m = 2 \\
2q^2 & \ \text{for} \ q \ \text{even}  \ \text{and} \ m\geq 3.
\end{array}\right.
$$
\end{proposition}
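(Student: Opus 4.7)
The plan is to exhibit an involution $\tau \in \textup{Aut}(T_m/K)$ that moves $\mathfrak{P}_{\infty}$ off itself, and then to combine $\tau$ with the decomposition group $G_m(\mathfrak{P}_{\infty})$ of Theorem \ref{3theorem1} to realise every rational place supporting $x_0^q+x_0$ as a conjugate of $\mathfrak{P}_{\infty}$. The natural candidate is the level-reversing inversion
\[
\tau\colon x_i \longmapsto \frac{1}{x_{m-i}} \qquad (i=0,\ldots,m).
\]
To check that $\tau$ extends to a $K$-automorphism of $T_m$, I would substitute into the tower relations and obtain $\tau(x_i)^q+\tau(x_i)=(1+x_{m-i}^{q-1})/x_{m-i}^q$ and $\tau(x_{i-1})^q/(\tau(x_{i-1})^{q-1}+1)=1/(x_{m-i+1}^q+x_{m-i+1})$; the equality of these two expressions is precisely the tower relation at level $m-i+1$, which holds in $T_m$. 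Hence $\tau \in G_m$ and $\tau$ is an involution. Evaluating at $\mathfrak{P}_{\infty}$ gives $\tau(\mathfrak{P}_{\infty})=\mathfrak{Q}_{0,m}$, and the identity $(1/a)^q+1/a=(a^q+a)/a^{q+1}=0$ for $a \in A^{\times}$ shows $1/a \in A^{\times}$ and hence $\tau(\mathfrak{P}_a)=\mathfrak{Q}_{1/a,m}$.

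With $\tau$ in hand, I would combine it with $G_m(\mathfrak{P}_{\infty})$ to sweep out the full orbit of $\mathfrak{P}_{\infty}$. For $q$ odd or $m=1$, the translations $\sigma(x_m)=x_m+a$ ($a\in A$) in $G_m(\mathfrak{P}_{\infty})$ carry $\mathfrak{Q}_{0,m}$ onto $\{\mathfrak{Q}_{b,m} : b \in A\}$; by the non-trivial relative degrees in Proposition \ref{2prop1}(b) these are the only rational places above $\mathfrak{P}_0$, and applying $\tau$ additionally recovers the $q-1$ totally ramified places $\mathfrak{P}_a$ ($a\in A^{\times}$). In the even-characteristic case with $m\geq 2$, the additional ``big'' elements of $G_m(\mathfrak{P}_{\infty})$ --- those with $\sigma(x_{m-1})=x_{m-1}+a$ for $a\neq 0$ and the correction $a^2/(cx_{m-2})+b$ on $\sigma(x_m)$ --- enlarge the $G_m(\mathfrak{P}_{\infty})$-orbit of $\mathfrak{Q}_{0,m}$ by a further $q(q-1)$ rational places with $x_0=\cdots=x_{m-2}=0$ and $x_{m-1}\in A^{\times}$, namely the complete-decomposition extensions in $T_m/T_{m-1}$ of each zero of $x_{m-1}-a^*$ guaranteed by Proposition \ref{2prop1}(b). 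For $m=2$ these already exhaust the rational places above $\mathfrak{P}_0$; for $m\geq 3$ one additionally applies $\tau$ to reflect these ``deep-end'' places onto rational places with $x_0=0,\,x_1\in A^{\times}$, producing another $q(q-1)$ conjugates.

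The bound on $r_{\infty}$ now follows by counting the rational places supporting $x_0^q+x_0$. By Proposition \ref{2prop1} these consist of $\mathfrak{P}_{\infty}$, the $q-1$ totally ramified zeros $\mathfrak{P}_a$, and the rational places above $\mathfrak{P}_0$ in $T_m$, which number $q$ for $q$ odd or $m=1$, $q^2$ for $q$ even with $m=2$, and $2q^2-q$ for $q$ even with $m\geq 3$; summing gives $2q,\,q(q+1)$ and $2q^2$ respectively. The main technical obstacle I anticipate is the case analysis above $\mathfrak{P}_0$ in the even characteristic with $m\geq 3$: one must use the non-trivial relative degrees in $T_{i+2}/T_{i+1}$ from Proposition \ref{2prop1}(b) to rule out all ``middle'' branches (e.g.\ $x_0=x_1=0,\,x_2\in A^{\times}$ for $m\geq 4$), so that the rational places above $\mathfrak{P}_0$ split into exactly the three families $\mathfrak{Q}_{b,m}$, ``deep-end'' and ``top-end'' --- which are precisely the families produced above.
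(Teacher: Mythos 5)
Your proof is correct and follows essentially the same route as the paper: exhibit the level-reversing reflection automorphism, combine it with the elements of $G_m(\mathfrak{P}_{\infty})$ from Theorem \ref{3theorem1} to sweep out the orbit of $\mathfrak{P}_{\infty}$ (including the extra zeros of $x_1^{q-1}+1$ and $x_{m-1}^{q-1}+1$ in even characteristic), and then count the rational places in the support of $x_0^q+x_0$. The only cosmetic difference is that the paper sends each $\mathfrak{P}_a$ to $\mathfrak{P}_{\infty}$ directly by a fractional-linear automorphism $x_0\mapsto ax_0/(x_0+b)$, whereas you reach $\mathfrak{P}_a$ as $\tau\sigma\tau(\mathfrak{P}_{\infty})$ by conjugating a translation by the reflection.
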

\begin{proof} We present automorphisms $T_m/K$ which send the above mentioned places to $\mathfrak{P}_{\infty}$. For the zero $\mathfrak{P}_a$ of $x_0-a$ with $a\in A^{\times}$ we find the automorphism $\tau$ with
$$ \tau(x_0) = \frac{ax_0}{x_0+b}, \quad \tau(x_i)=ab^{-1}x_i \quad\textup{for } i=1,\ldots,m-2 $$
and
$$ \tau(x_{m-1})=ab^{-1}x_{m-1}, \quad \tau(x_m)=ab^{-1}x_m + d \quad\textup{for odd } q $$
or
$$ \tau(x_{m-1})=ab^{-1}x_{m-1}+d, \quad \tau(x_m)=ab^{-1} + \frac{d^2}{ab^{-1}x_{m-2}} + e \quad\textup{for even } q$$
respectively with $b\in A^{\times}, d\in A$ and $e\in A_d$, which sends $\mathfrak{P}_a$ to $\mathfrak{P}_{\infty}$. With these properties we can describe all automorphisms with $\tau(\mathfrak{P}_a)=\mathfrak{P}_{\infty}$. 
\\[0.1cm]
A rational zero $\mathfrak{Q}_{b,m}$ of $x_0$ is uniquely determined as zero of $x_m-b$ with $b\in A$. An automorphism with $\tau(\mathfrak{Q}_{b,m})=\mathfrak{P}_{\infty}$ is given by
$$ \tau(x_m)=\frac{c+bx_0}{x_0} \quad\textup{and}\quad \tau(x_i)=\frac{c}{x_{m-i}} \quad\textup{for } i=0,\ldots,m-1 $$
with $c\in\mathbb{F}_q^{\times}$. (This automorphism reflects the pyramide of the norm-trace tower.) Therefore $\mathfrak{Q}_{b,m}$ is conjugated to $\mathfrak{P}_{\infty}$.  For odd characteristic we have considered all rational places supporting $x_0^q+x_0$. 
\\[0.1cm]
For even characteristic $x_0^q+x_0$ has also rational zeros that are zeros of $x_1^{q-1}+1$ or $x_{m-1}^{q-1}+1$ respectively. A zero $\mathfrak{Q}$ of $x_{m-1}+a$ is uniquely determined by
$$ \tilde{x}_m = x_m + \frac{a^2}{x_{m-2}} + b \in \mathfrak{Q} \quad\textup{with } b\in A_a. $$
This holds by the proof of \cite[Lemma 3.4]{GarStiABST}. We can find some automorphism $\sigma$ of $G_m(\mathfrak{P}_{\infty})$ sending $\tilde{x}_m$ to $x_m$ and hence $\mathfrak{Q}_{0,m}$ to $\mathfrak{Q}$. Therefore $\mathfrak{P}_{\infty},\mathfrak{Q}_{0,m}$ and $\mathfrak{Q}$ are conjugated. A zero $\mathfrak{Q}$ of $x_1+a$ is uniquely determined by
$$ \tilde{x}_2 = x_2 + \frac{a^2}{x_0} + b \in\mathfrak{Q} \quad\textup{with } b \in A_a. $$
We find an automorphism $\rho$ with $\rho(\mathfrak{Q}_{0,m})=\mathfrak{Q}$ as following. First we choose a mapping $\sigma$ of $G_m(\mathfrak{P}_{\infty})$ with 
$$ \sigma(x_m)=\tilde{x}_m = a^2x_m + \frac{1}{x_{m-2}} + b.  $$
The above described automorphism $\tau$ with $\tau(\mathfrak{Q}_{0,m})=\mathfrak{P}_{\infty}$ and $c=1$ sends $\tilde{x}_m$ to 
$$ \tau(\tilde{x}_m) = \frac{a^2}{x_0} + x_2 + b = \tilde{x}_2 \in \mathfrak{Q}. $$
The composition $\rho = \tau \circ \sigma$ satisfies $\rho(\mathfrak{Q}_{0,m})=\mathfrak{Q}$ as desired. 
\\[0.1cm]
Finally we get the bounds of $r_{\infty}$ by counting the rational places in the support of $x_0^q+x_0$.
\end{proof}

\section{The Decomposition Tower}\label{DecompositionTower}

In this section we investigate some fixed fields of the automorphisms presented in Theorem \ref{3theorem1} and Proposition \ref{4prop1}. We will see that the decomposition fields of $\mathfrak{P}_{\infty}$ form a subtower $Z_m/K$ of the norm-trace tower $T_m/K$ which is generated by a Kummer descent of degree $q-1$. This subtower inherits good properties of the norm-trace tower and it turns out that its genus, number of rational places and pole numbers of $\mathfrak{P}_{\infty}$ are proportional with factor $q-1$ to those in $T_m/K$.

\begin{theorem}[Decomposition tower $Z_m/K$]\ \label{5theorem0}\label{5remark1}\label{5prop0}\label{5prop1}
 
The fixed fields of $G_m(\mathfrak{P}_{\infty})$ form a subtower of the norm-trace tower which we call the decomposition tower $Z_m/K$. It is generated by the $(q-1)$-th power of $x_0,\ldots,x_m$, i.e.
$$ Z_m=K(z_0,\ldots,z_m) \qquad \text{with } z_i=x_i^{q-1}. $$
Following assertions hold:
\begin{enumerate}
 \item[\textup{(a)}] The decomposition field of $\mathfrak{P}_{\infty,m}$ in $T_m/T_m^{G_m}$ is
	$$ T_m^{G_m(\mathfrak{P}_{\infty})} = Z_{m-\varepsilon} \qquad \text{for } m\geq 1. $$
 \item[\textup{(b)}] The fixed field of the group $G_m(\mathfrak{P}_{\infty}\prod\mathfrak{P}_a)$ generated by the automorphisms of $T_m/K$ permuting the support of $x_0^{q-1}+1$ is 
	$$ Z_{m-\varepsilon-1}^1 = K(z_1,\ldots,z_{m-\varepsilon}) \qquad \textup{for }m\geq 1 + \varepsilon. $$
 \item[\textup{(c)}] The defining relations for $z_0,\ldots,z_m$ are
	$$ z_{i+1}(z_{i+1}+1)^{q-1} = \frac{z_i^q}{(z_i+1)^{q-1}}. $$
 \item[\textup{(d)}] The decompostion tower $Z_m/K$ satisfies the equalities
	\begin{center} $Z_m=Z_{m-1}(\frac{x_m}{x_{m-1}})=Z_0(\frac{x_1}{x_0},\ldots,\frac{x_m}{x_{m-1}})
=K(x_0^{q-1},\frac{x_1}{x_0},\ldots,\frac{x_m}{x_{m-1}}).$\end{center}
	The minimal relation for the extension $Z_{i+1}/Z_i$ is
	$$ \left( \frac{x_{i+1}}{x_i} \right)^q + \frac{1}{z_i} \left(\frac{x_{i+1}}{x_i}\right) = \frac{1}{z_i+1} $$
	and the minimal relation for $Z_{i+1}/K(z_1,\ldots,z_{i+1})$ is
	$$ \left(\frac{x_{i+1}}{x_i} \right)^q  + z_{i+1} \left(\frac{x_{i+1}}{x_i} \right) = \frac{z_{i+1}}{z_{i+1}+1}. $$
	In particular $T_m=Z_m(x_i)$ holds for $0\leq i \leq m$ and $Z_m/K$ is a subtower of the norm-trace tower of index $[T_m:Z_m]=q-1$.
\end{enumerate}

\end{theorem}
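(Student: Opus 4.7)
The plan is to work from the ground up: derive the algebraic content of (c) and (d) directly from the defining relation of $T_m/K$, then combine these with Theorem \ref{3theorem1} and Galois theory to conclude (a), and finally use Proposition \ref{4prop1} to obtain (b).

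For (c) and the first half of (d), I would rewrite the defining relation as $x_{i+1}(x_{i+1}^{q-1}+1) = x_i^q/(x_i^{q-1}+1)$ and raise both sides to the $(q-1)$-th power, producing (c) at once. Dividing the defining relation by $x_i^q$ yields the first minimal relation of (d), and dividing instead by $x_{i+1}x_i$ gives the identity $x_{i+1}/x_i = z_i/((z_i+1)(z_{i+1}+1))$; this shows $x_{i+1}/x_i \in K(z_i,z_{i+1})$ and, by induction, $K(z_0,\ldots,z_m) = K(x_0^{q-1},x_1/x_0,\ldots,x_m/x_{m-1})$. The second minimal relation follows by multiplying the first by $(x_{i+1}/x_i)^{q-1} = z_{i+1}/z_i$ and simplifying. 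The identity $T_m = Z_m(x_0)$ is immediate from $x_j = x_0 \cdot (x_1/x_0) \cdots (x_j/x_{j-1})$, and $x_0^{q-1} = z_0$ gives $[T_m:Z_m] \leq q-1$. For the reverse inequality, Theorem \ref{3theorem1} provides the $q-1$ diagonal automorphisms $\sigma_c : x_i \mapsto cx_i$ (the $a=0$ elements of $G_m(\mathfrak{P}_\infty)$), all of which fix every $z_i$ since $c^{q-1}=1$; hence $[T_m:Z_m] = q-1$, and multiplicativity together with $[T_m:T_0] = q^m$ and $[T_0:Z_0] = q-1$ forces every step $[Z_{i+1}:Z_i]$ to equal $q$, confirming minimality of the relations and yielding $T_m = Z_m(x_i)$ for every $i$.

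For (a), Theorem \ref{3theorem1} guarantees that $\sigma(x_i) = cx_i$ for every $\sigma \in G_m(\mathfrak{P}_\infty)$ and every $i \leq m-\varepsilon$, so $\sigma(z_i) = z_i$ there, giving $Z_{m-\varepsilon} \subseteq T_m^{G_m(\mathfrak{P}_\infty)}$. Since $T_m/T_m^H$ is Galois of degree $|H|$ for every subgroup $H \leq G_m$, and
$$[T_m:Z_{m-\varepsilon}] = [T_m:T_{m-\varepsilon}] \cdot [T_{m-\varepsilon}:Z_{m-\varepsilon}] = q^\varepsilon(q-1) = |G_m(\mathfrak{P}_\infty)|$$
by (d) and Theorem \ref{3theorem1}, the inclusion is an equality. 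For (b), the inclusion $G_m(\mathfrak{P}_\infty) \subseteq G_m(\mathfrak{P}_\infty \prod \mathfrak{P}_a)$ combined with (a) already places the fixed field of the larger group inside $Z_{m-\varepsilon}$. Taking $b = -a \in A^\times$ in Proposition \ref{4prop1} yields automorphisms $\tau_a$ acting by $x_i \mapsto -x_i$ on the middle levels, and $(-1)^{q-1}=1$ implies they fix every $z_i$ with $i \geq 1$, so $K(z_1,\ldots,z_{m-\varepsilon})$ lies in the fixed field. The relation (c) at $i=0$ presents $z_0$ as a root of a degree-$q$ polynomial over $K(z_1)$, so $[Z_{m-\varepsilon}:K(z_1,\ldots,z_{m-\varepsilon})] \leq q$; conversely the $q$ places in the support of $x_0^{q-1}+1$ (the totally ramified places of $T_m/T_0$, hence stable under any automorphism) form a single $G_m(\mathfrak{P}_\infty \prod \mathfrak{P}_a)$-orbit by Proposition \ref{4prop1}, so $[G_m(\mathfrak{P}_\infty \prod \mathfrak{P}_a) : G_m(\mathfrak{P}_\infty)] = q$ and Galois theory gives $[Z_{m-\varepsilon} : T_m^{G_m(\mathfrak{P}_\infty \prod \mathfrak{P}_a)}] = q$. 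The inclusion chain $K(z_1,\ldots,z_{m-\varepsilon}) \subseteq T_m^{G_m(\mathfrak{P}_\infty \prod \mathfrak{P}_a)} \subseteq Z_{m-\varepsilon}$ with matching degree bound $q$ at both ends forces equality throughout.

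The main obstacle I expect is the even-characteristic top of the tower, where for $q$ even and $m \geq 2$ the elements of $G_m(\mathfrak{P}_\infty)$ are no longer diagonal on $x_m$ but carry the twist $\sigma(x_m) = cx_m + a^2/(cx_{m-2}) + b$. One has to verify carefully that $z_{m-1}$ and $z_m$ are genuinely moved by a suitable such $\sigma$, justifying $\varepsilon=2$ and preventing the fixed field from rising above level $m-\varepsilon$, while $z_0,\ldots,z_{m-2}$ remain invariant. Once this case distinction is absorbed into the single parameter $\varepsilon$ supplied by Theorem \ref{3theorem1}, the degree counts above go through uniformly in $q$ and $m$.
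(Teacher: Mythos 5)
Your proposal is correct and follows essentially the same route as the paper: parts (c) and (d) by raising the defining relation to the $(q-1)$-th power and computing $x_{i+1}/x_i$ explicitly, then (a) and (b) by exhibiting the inclusion of the candidate field in the fixed field and closing the gap with the degree counts $[T_m:Z_{m-\varepsilon}]=q^{\varepsilon}(q-1)=|G_m(\mathfrak{P}_{\infty})|$ and $[T_m:Z^1_{m-\varepsilon-1}]=q^{\varepsilon+1}(q-1)$. The only cosmetic difference is in (b), where you obtain the index $[G_m(\mathfrak{P}_{\infty}\prod\mathfrak{P}_a):G_m(\mathfrak{P}_{\infty})]=q$ by orbit--stabilizer and sandwich the fixed field between $K(z_1,\ldots,z_{m-\varepsilon})$ and $Z_{m-\varepsilon}$, whereas the paper cites the explicit form of the non-stabilizing automorphisms from Proposition \ref{4prop1}; both amount to the same degree count, and your worry about the even-characteristic top of the tower is already absorbed by the Galois-theoretic equality $[T_m:T_m^{H}]=|H|$, exactly as you note at the end.
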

\begin{proof} 
(a) The decomposition field $T_m^{G_m(\mathfrak{P}_{\infty})}$ of $\mathfrak{P}_{\infty,m}$ contains $Z_{m-\varepsilon}$ because of
$$ \sigma(z_i) = \sigma(x_i)^{q-1} = c^{q-1}x_i^{q-1} = z_i $$
for $\sigma\in G_m(\mathfrak{P}_{\infty})$ and $i=0,\ldots,m-\varepsilon$. The reverse inclusion follows by (d) due to dimension reasons, i.e. $$ [T_m:Z_{m-\varepsilon}] = q^{\varepsilon}(q-1) = |G_m(\mathfrak{P}_{\infty})|. $$
(b) Any automorphism $\tau$ of $G_m(\mathfrak{P}_{\infty}\prod\mathfrak{P}_a)\backslash G_m(\mathfrak{P}_{\infty})$ has the properties
$$ \tau(x_0) = \frac{ax_0}{x_0+b} \quad\textup{and}\quad \tau(x_i)=ab^{-1}x_i \quad\textup{for } i=1,\ldots,m-\varepsilon $$
with $a,b\in A^{\times}$. Hence we obtain $\tau(z_i)=z_i$ for $1\leq i \leq m-\varepsilon$ by $(ab^{-1})^{q-1}=1$ and therefore $Z_{m-\varepsilon}$ is contained in the fixed field of $G_m(\mathfrak{P}_{\infty}\prod\mathfrak{P}_a)$. The reverse inclusion also follows by (d) due to $$[T_m:Z_{m-\varepsilon-1}]=q^{\varepsilon+1}(q-1)=|G_m(\mathfrak{P}_{\infty}\prod\mathfrak{P}_a)| .$$
(c) We get relative relations for $z_0,\ldots,z_m$ due to the $(q-1)$-th power of the relations of the norm-trace tower
$$ z_{i+1}(z_{i+1}+1)^{q-1} = (x_{i+1}^q+x_{i+1})^{q-1} = \left( \frac{x_i^q}{x_i^{q-1}+1}\right)^{q-1} 
= \frac{z_i^q}{(z_i+1)^{q-1}}. $$ 
In particular we get $[Z_{i+1} : Z_i]\leq q$ and $[T_0:Z_0]=q-1$. 
\\[0.1cm]
(d) The stated equalites are obtained by 
\begin{align*}
 \frac{x_i}{x_{i-1}} \
& = \frac{x_i^q+x_i}{x_i^{q-1}+1}\cdot \frac{1}{x_{i-1}}
= \frac{x_{i-1}^q}{x_{i-1}^{q-1}+1} \cdot \frac{1}{x_{i-1}} \cdot \frac{1}{x_i^{q-1}+1} \\
& = \frac{x_{i-1}^{q-1}}{(x_i^{q-1}+1)(x_{i-1}^{q-1}+1)} = \frac{z_{i-1}}{(z_i+1)(z_{i-1}+1)} 
\end{align*}
and
$$ z_i = x_i^{q-1} = \left( \frac{x_i}{x_{i-1}} \right)^{q-1} \cdot x_{i-1}^{q-1} = \left( \frac{x_i}{x_{i-1}} \right)^{q-1} \cdot z_{i-1}. $$
In particular $[Z_{i+1}:Z_i]=q$ and $[T_i:Z_i]=q-1$ follows. 
\end{proof}

Figure \ref{Fig1} shows parts of the Galois correspondence of the norm-trace tower or the decomposition tower respectively.
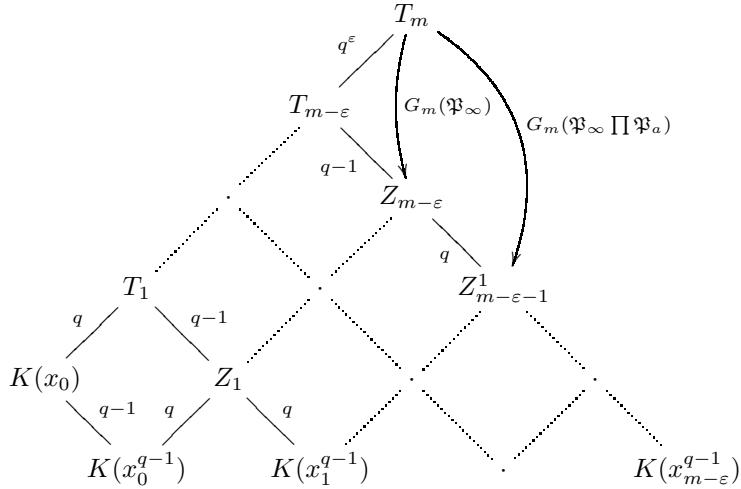
\begin{figure}[hbt]
$$ \xymatrix@!@-3pc
{
 & & & & T_m\ar@/_/[dd]^{G_m(\mathfrak{P}_{\infty})}\ar@/^2pc/[dddr]^{G_m(\mathfrak{P}_{\infty}\prod\mathfrak{P}_a)} & \\
 & & & T_{m-\varepsilon}\ar@{-}[ur]^{q^{\varepsilon}} & \\
 & & \cdot\ar@{.}[ur] & & Z_{m-\varepsilon}\ar@{-}[ul]^{q-1} & & & &  \\
 & T_1\ar@{.}[ur] & & \cdot\ar@{.}[ur]\ar@{.}[ul] & & Z^1_{m-\varepsilon-1}\ar@{-}[ul]^{ q}& \\
 K(x_0)\ar@{-}[ur]^q & & Z_1\ar@{-}[ul]_{ q-1}\ar@{.}[ur]& & 
 \cdot\ar@{.}[ul]\ar@{.}[ur]& & \cdot\ar@{.}[ul] \\
 & K(x_0^{q-1})\ar@{-}[ul]_{q-1}\ar@{-}[ur]^{ q} & & K(x_1^{q-1})\ar@{-}[ul]_{ q}\ar@{.}[ur] 
 & & \cdot\ar@{.}[ul]\ar@{.}[ur] & & K(x_{m-\varepsilon}^{q-1})\ar@{.}[ul]
} 
$$
\caption{Pyramide structure of the decomposition tower}\label{Fig1}
\end{figure}

\begin{proposition}\label{5prop2}
Every rational place of $Z_0/K$ has a rational extension in $Z_m/K$. The following assertions hold:
\begin{enumerate}
\item[\textup{(a)}] The pole and the numerator of $z_0+1$ totally ramify in $Z_m/Z_0$.
\item[\textup{(b)}] The zeros of $z_0-d$ with $d=b^{q-1}\neq 0,1$ completely decompose in $Z_m/Z_0$.
\item[\textup{(c)}] The zerodivisor of $z_0-d$ with $d\neq b^{q-1}$ has exactly one rational extension $\mathfrak{R}$ in $Z_m/K$ and $z_m \not\equiv b^{q-1} \pmod{\mathfrak{R}}$ holds for all $b\in K$.
\item[\textup{(d)}] The functions $z_0,\ldots,z_{m-1}$ have exactly two common rational zeros $\mathfrak{Q}^*_{0,m},$ $\mathfrak{Q}^*_{-1,m}$ in $Z_m/K$ and $z_m \equiv b \pmod{\mathfrak{Q}^*_{b,m}} $ holds. For odd $q$ these are the only rational zeros of $z_0$. For even $q$ and $m\geq 2$ the set of rational zeros of $x_0$ also includes $q$ rational zeros of $z_1+1$ and $q$ rational zeros of $z_{m-1}+1$.
 
\end{enumerate}
 
\end{proposition}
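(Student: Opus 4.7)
My overall strategy is to lift each rational place of $Z_0$ to $T_m$ through the Kummer step $T_m = Z_m(x_0)$ of degree $q-1$, apply the ramification and decomposition data from Propositions \ref{2prop1} and \ref{2prop1a}, and descend to $Z_m$ by taking orbits of $\textup{Gal}(T_m/Z_m) = \mathbb{F}_q^\times$ under the action $x_i \mapsto cx_i$.

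Parts (a) and (b) are then formal. For (a), $\mathfrak{P}_{\infty}$ is totally ramified both in $T_0/Z_0$ (being Kummer of degree $q-1$) and in $T_m/T_0$ by Proposition \ref{2prop1}(a), so its ramification index over the pole of $z_0$ in $Z_0$ is $(q-1)q^m = [T_m:Z_0]$, forcing the intermediate place in $Z_m$ to be totally ramified of index $q^m$. The $q-1$ places $\mathfrak{P}_{a,m}$ ($a\in A^\times$) above the numerator of $z_0+1$ are totally ramified in $T_m/T_0$ and form a single $\mathbb{F}_q^\times$-orbit (since $\sigma_c$ sends $\mathfrak{P}_{a,m}$ to $\mathfrak{P}_{a/c,m}$), so they restrict to one rational, totally ramified place of $Z_m$. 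For (b), with $d = b^{q-1}$ and $b \in K \setminus (A \cup \{0\})$, the $q-1$ rational lifts $x_0 = b\zeta$ are completely decomposed in $T_m/T_0$ by Proposition \ref{2prop1a}(a), and the free $\mathbb{F}_q^\times$-action on the $(q-1)q^m$ resulting rational places of $T_m$ produces $q^m$ orbits in $Z_m$.

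The key point is part (c), which I prove by induction along the tower with the hypothesis that the residue $d_i$ of $z_i$ at a rational extension $\mathfrak{r}_i \in Z_i$ lies in $K \setminus (\mu_{q+1} \cup \{0\})$. In characteristic $p$ the derivative of $t(t+1)^{q-1}$ reduces to $(t+1)^{q-2}$, which vanishes only at $t=-1$; since $c := d_i^q/(d_i+1)^{q-1}$ is a unit at $\mathfrak{r}_i$ (as $d_i \neq 0, -1$), the step $Z_{i+1}/Z_i$ is unramified above $\mathfrak{r}_i$ and rational extensions correspond bijectively to solutions $t \in K$ of $t(t+1)^{q-1} = c$. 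The substitution $u = 1/(t+1)$ converts this into the affine $\mathbb{F}_q$-linear equation $L(u) := cu^q + u = 1$, whose kernel $\{0\} \cup \{u \in K^\times : u^{q-1} = -c^{-1}\}$ is trivial exactly when $c \notin \mu_{q+1}$. Reducing in $K^\times/\mu_{q+1}$, where $(d_i+1)^{q-1}$ is trivial and $d_i^q \equiv d_i$, gives $\bar{c} = \overline{d_i}$, nontrivial by hypothesis, so $L$ is bijective and $t \in K$ is unique; the same coset computation yields $\bar{t} = \overline{d_i} \notin \mu_{q+1}$, and $t \neq 0$ since $c \neq 0$. This closes the induction and produces the single rational extension $\mathfrak{R} \in Z_m$ with $z_m(\mathfrak{R}) \notin \mu_{q+1} \cup \{0\}$, i.e.\ $z_m \not\equiv b^{q-1} \pmod{\mathfrak{R}}$ for all $b \in K$.

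For (d), common rational zeros of $z_0, \ldots, z_{m-1}$ in $Z_m$ lift to rational places of $T_m$ with $x_0 = \cdots = x_{m-1} = 0$; the defining relations force $x_m \in A$, so by Proposition \ref{2prop1}(d) these are precisely the $q$ places $\mathfrak{Q}_{b,m}$ for $b \in A$, splitting under $\mathbb{F}_q^\times$ into the orbits $\{\mathfrak{Q}_{0,m}\}$ and $\{\mathfrak{Q}_{b,m} : b \in A^\times\}$ that restrict to $\mathfrak{Q}^*_{0,m}$ ($z_m \equiv 0$) and $\mathfrak{Q}^*_{-1,m}$ ($z_m \equiv -1$). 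The total number of rational places of $T_m$ above $x_0 = 0$ is $q + \varepsilon_m$, obtained from Proposition \ref{2prop1a}(b) by subtracting the pole, the $q-1$ totally ramified, and the $(q^2-q)q^m$ completely decomposed contributions. For odd $q$ or $m = 1$ this equals $q$, giving no further rational zeros of $z_0$ in $Z_m$. The main obstacle is the case $q$ even, $m \geq 2$: the remaining $\varepsilon_m$ rational places of $T_m$ above $x_0 = 0$ must be identified with the rational zeros of $x_1^{q-1}+1$ and $x_{m-1}^{q-1}+1$ parametrized in the proof of Proposition \ref{4prop1}, and I will verify using the explicit $\sigma(x_m)$-formula from Theorem \ref{3theorem1} that their $\mathbb{F}_q^\times$-orbits all have size $q-1$ and descend to exactly $q$ distinct rational zeros of each of $z_1+1$ and $z_{m-1}+1$ in $Z_m$.
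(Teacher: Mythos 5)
Your parts (a), (b) and (c) are sound. In (c) you take a genuinely different route from the paper: the paper reduces the degree-$q$ minimal polynomial $T^q+d^{-1}T-(d+1)^{-1}$ of $x_1/x_0$, exhibits the explicit root $\tilde d$ and shows a second root in $K$ would force $d$ to be a $(q-1)$-th power, whereas you work with the degree-$q$ relation for $z_{i+1}$ over $Z_i$, linearize via $u=1/(t+1)$, and decide injectivity of $u\mapsto cu^q+u$ by the class of $c$ in $K^\times/\mu_{q+1}$. Both inductions track the same invariant (the residue of $z_i$ modulo $(q-1)$-th powers); yours gets existence and uniqueness of the rational root in one stroke without guessing it, at the cost of having to check separability of $T(T+1)^{q-1}-c$, which you do correctly.

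Part (d), however, has a genuine gap. Your method detects rational places of $Z_m$ only as $\mathbb{F}_q^\times$-orbits of \emph{rational} places of $T_m$, and your conclusion ``only $q$ rational places of $T_m$ lie over $x_0=0$, hence no further rational zeros of $z_0$ in $Z_m$'' tacitly assumes that every rational zero of $z_0$ in $Z_m$ lies under some rational place of $T_m$. That is not automatic: in a degree-$(q-1)$ Kummer extension a rational place downstairs can a priori be inert or have inertia degree $>1$ upstairs, in which case counting rational places of $T_m$ would miss it. Concretely, a rational place $\mathfrak p$ of $Z_m$ with $z_0\equiv\cdots\equiv z_{i-1}\equiv 0$ and $z_i\equiv -1$ ($i<m$) is not covered by your fibre analysis over the common zero of $x_0,\dots,x_{m-1}$. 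The missing step is the one the paper supplies at the end of its proof: at such a $\mathfrak p$ the polynomial $T^{q-1}-z_i\equiv T^{q-1}+1=\prod_{a\in A^\times}(T-a)$ splits into distinct linear factors over $K$, so by Kummer's theorem $\mathfrak p$ splits completely in $T_m=Z_m(x_i)$ and therefore \emph{does} lie under $q-1$ rational places of $T_m$ over $x_0=0$; only then does your count $q+\varepsilon_m$ close the argument (and, for odd $q$, Proposition \ref{2prop1}(b) already shows such rational extensions cannot exist). The same observation is what legitimizes identifying the extra $\varepsilon_m$ rational places with \emph{all} rational zeros of $z_1+1$ and $z_{m-1}+1$ in the even case. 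Add this one application of Kummer's theorem (and discharge the deferred orbit-size verification, which is routine since $\sigma_c$ moves a zero of $x_1-a$ to a zero of $x_1-c^{-1}a$), and your (d) becomes a valid, and arguably more transparent, alternative to the paper's step-by-step computation of ramification indices via $n=\sum e_jf_j$ — though note it leans on the global count of Proposition \ref{2prop1a}(b), which the paper's argument does not need.
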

\begin{proof} (See also \cite{BezGarNGT}) (a) This statement is clear, because the support of $z_0+1$ is totally ramified in $T_m/T_0$. The pole $\mathfrak{P}_{\infty,m}$ is even totally ramified in $T_i/Z_i$ for $i=0,\ldots,m$.
\\[0.1cm]
(b) Let $\mathfrak{R}$ be a zero of $z_0-d$ in $Z_m/K$. The primitive element $x_0$ of $T_i/Z_i$ is integral over $\mathfrak{R}\cap Z_i$ for $i=0,\ldots,m$ and its minimal polynomial decomposes modulo $\mathfrak{R}\cap Z_i$ in
$$ T^{q-1}-z_0 \equiv T^{q-1}-d = \prod\limits_{c\in\mathbb{F}_q^{\times}} (T-cb) \pmod{\mathfrak{R}\cap Z_i}. $$
The factors are pairwise different and hence $\mathfrak{R}$ is completely decomposed in $T_i/Z_i$ by the Theorem of Kummer (see \cite[Theorem III.3.7]{Sti}). The zeros of $x_0-cb$ are completely decomposed in $T_m/T_0$ because of $cb\not\in A$. Therefore $\mathfrak{R}$ is completely decomposed in $Z_m/Z_0$.
\\[0.1cm]
(c) Let $\mathfrak{R}$ be a rational zero of $z_0-d$ in $Z_m/K$. We consider the minimal polynomial of $x_1/x_0$ modulo $\mathfrak{R}\cap Z_0$ 
$$ \varphi_0(T)= T^q + d^{-1}T - (d+1)^{-1}. $$ 
We claim that $\tilde{d}=s\cdot (d+1)^{-1} \cdot (d^q+1)^{-1}$ is the only zero of $\varphi_0(T)$ contained in $K$. It is an element of $K$, because $d/\tilde{d}=(d+1)(d^q+1)$ is equal to its $q$-th power. It is a zero of $\varphi_0(T)$ due to
$$ (d+1)\varphi_0(\tilde{d}) = (d+1)(\tilde{d}^q+d^{-1}\tilde{d}) - 1 = (d+1) \;\tilde{d}/d\; (d^q+1) - 1 = 0. $$
Then
$$ 0 = \frac{\varphi_0(\tilde{d}) - \varphi_0(d')}{\tilde{d}-d'} = (\tilde{d}-d')^{q-1} + d^{-1} $$
for some other zero $d'\in \overline{K}$ of $\varphi_0(T)$ not equal to $d$. Hence $d$ is a $(q-1)$-th power of $(\tilde{d}-d')^{-1}$. With our assumption $d \neq b^{q-1}$ for $b\in K$ we conclude that $\tilde{d}$ is the only zero of $\varphi_0(T)$ contained in $K$. By the Theorem of Kummer we get exactly one rational extension and several non-rational extensions of $(z_0-d)_0$ in $Z_1/K$. The rational extension $\mathfrak{R}_1$ satisfies
$$ z_1 = \left(\frac{x_1}{x_0}\right)^{q-1} \negthickspace\cdot z_0 \quad\equiv\quad \tilde{d}^{q-1} \cdot d  \pmod{\mathfrak{R}_1}. $$
The constant $d_1:=\tilde{d}^{q-1}d$ is a $(q-1)$-th power in $K$ if and only if $d$ is a $(q-1)$-th power in $K$. Hence we get $d_1\neq b^{q-1}$ for all
$b\in K$ and the minimal polynomial of $x_2/x_1$ reduced by $\mathfrak{R}_1$ 
$$\varphi_1(T)=T^q+d_1^{-1}T-(d_1+1)^{-1}\equiv T^q+z_1^{-1}T-(z_1+1)^{-1} \pmod{\mathfrak{R}_1}$$ 
also has exactly one zero contained in $K$. Therefore $\mathfrak{R}_1$ has exactly one rational extension $\mathfrak{R}_2$ in $Z_2/K$. Proceeding inductively as above we conclude the proof of (c).

(d) Every zero of $z_0=x_0^{q-1}$ is a zero of $x_0$ and so it is either a common zero of $x_0,\ldots,x_m$ or a common zero of $x_0,\ldots,x_{i-1}$ and $x_i-a$ for some $1\leq i \leq m$ and $a\in A^{\times}$. So every zero of $z_0$ is either a common zero of $z_0,\ldots,z_m$ or a common zero of $z_0,\ldots,z_{i-1}$ and $z_i+1=z_i-a^{q-1}$. We verify inductively that the common zero of $z_0,\ldots,z_{i-1}$ has exactly two extension $\mathfrak{Q}^*_{0,i}$ and $\mathfrak{Q}^*_{-1,i}$ in $Z_i/Z_{i-1}$ with 
$z_i\equiv b \pmod{\mathfrak{Q}^*_{b,m}}$ and
$$ e_{\mathfrak{Q}^*_{0,i}}(Z_i/Z_{i-1}) = 1 \quad\textup{and}\quad e_{\mathfrak{Q}^*_{-1,i}}(Z_i/Z_{i-1}) = q-1 . $$
This assertion is trivially true for $i=0$. For the induction step $i-1$ to $i$ we assume that the assertion is true for $k\leq i-1$ with $i\geq 1$. Obviously $\mathfrak{Q}^*_{0,i-1}\cap Z_0$ totally ramifies in $T_0/Z_0$ as zero of $z_0$. With our induction assumption we get the ramification diagram as in Figure \ref{Fig2}.

\begin{figure}[hb]
$$ \xymatrix@!@-1.8pc{
& & & T_i \\
& & T_{i-1}\ar@{-}[ur]^1 & & Z_i\ar@{-}[ul]_?\\
& T_1\ar@{.}[ur]^1 & & Z_{i-1}\ar@{-}[ul]_{q-1}\ar@{-}[ur]_?\\
T_0\ar@{-}[ur]^1 & & Z_1\ar@{-}[ul]_{q-1}\ar@{.}[ur]_1 \\
 & Z_0 \ar@{-}[ul]_{q-1}\ar@{-}[ur]_1 
}
$$
\caption{Ramification diagram of $\mathfrak{Q}|\mathfrak{Q}^*_{0,i-1}$}\label{Fig2}
\end{figure}
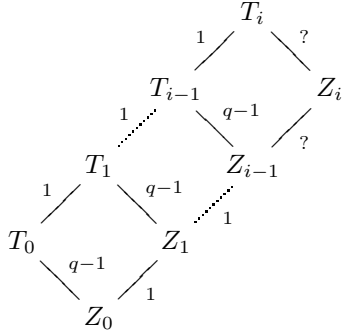

There are $q$ extensions $\mathfrak{Q}_{b,i}$ of $\mathfrak{Q}^*_{0,i-1}$ in $T_i/K$ with $b\in A$ and $z_i = x_i^{q-1} \equiv b^{q-1} \pmod{\mathfrak{Q}_{b,i}}$. The places $\mathfrak{Q}_{a,i}$ with $a\in A^{\times}$ are extensions of a zero of $z_i^{q-1}+1$ by the Theorem of Kummer. Therefore we get $e_{\mathfrak{Q}_{a,i}}(T_i/F_i)=1$ and
$$ e_{\mathfrak{Q}_{a,i}} (F_i/F_{i-1}) = e_{\mathfrak{Q}_{a,i}}(T_i/F_i)\cdot e_{\mathfrak{Q}_{a,i}}(F_i/F_{i-1})
= e_{\mathfrak{Q}_{a,i}}(T_i/F_{i-1}) = q-1. $$
By the arithmetic formula $n = \sum e_jf_j$ (see \cite[Theorem III.1.11]{Sti}) we conclude $e_{\mathfrak{Q}_{0,i}}(F_i/F_{i-1})=1$
and $e_{\mathfrak{Q}_{0,i}}(T_i/F_i)=q-1$. In particular $\mathfrak{Q}^*_{0,i-1}$ has exactly two extensions $\mathfrak{Q}^*_{0,i}$ and $\mathfrak{Q}^*_{-1,i}$ in $Z_i/K$ with $z_i \equiv b \pmod{\mathfrak{Q}_{b,i}^*}$.
This proves our assertion. 

The places $\mathfrak{Q}_{0,m}|\mathfrak{Q}^*_{0,m}$ and $\mathfrak{Q}_{a,m}|\mathfrak{Q}^*_{-1,m}$ are rational in $T_m/K$ and hence $\mathfrak{Q}^*_{0,m}$ and $\mathfrak{Q}^*_{-1,m}$ are rational in $Z_m/K$. The places $\mathfrak{Q}^*_{-1,i}$ with $1\leq i \leq m-1$ completely decompose in $T_{i+k}/Z_{i+k}$ by the Theorem of Kummer due to $T_{i+k}=Z_{i+k}(x_i)$ with $k\geq 0$. Hence the inertia indices of $\mathfrak{Q}^*_{-1,i}$ equal the inertia indices of their extensions in the norm-trace tower. This proves our statement.
\end{proof}

Now we can establish the number of rational places and the genus of $Z_m/K$.

\begin{theorem}[Rational places of the decomposition tower]
\ \label{5theorem1}

The decomposition tower $Z_m/K$ of height $m\geq 1$ has
$$ N_1(Z_m/K) = q^{m+1}+q^2-q+2+\varepsilon^*_m $$
rational places with $\varepsilon^*_m=\varepsilon_m/(q-1)$. (See Proposition \ref{2prop1a} for the definition of $\varepsilon_m$.)
\end{theorem}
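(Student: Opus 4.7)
The plan is to count rational places of $Z_m/K$ by classifying them according to their restriction to the rational subfield $Z_0=K(z_0)$, which has exactly $q^2+1$ rational places indexed by $d\in K\cup\{\infty\}$. Every rational place of $Z_m$ must restrict to a rational place of $Z_0$, so Proposition~\ref{5prop2} tells us how each such place ramifies and decomposes in $Z_m/Z_0$, and the strategy is to iterate over $d$ and sum the rational extensions case by case.

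First, I would dispose of the totally ramified values from Proposition~\ref{5prop2}(a): $d=\infty$ and $d=-1$ (which coincides with $d=1$ in even characteristic) each contribute exactly one rational place, giving the ``$+2$'' in the formula. Next, for $d=0$ I would apply Proposition~\ref{5prop2}(d) directly: this contributes $2+\varepsilon_m^*$ rational places, namely the two common zeros $\mathfrak{Q}^*_{0,m}$ and $\mathfrak{Q}^*_{-1,m}$ of $z_0,\ldots,z_{m-1}$, together with, in even characteristic, $q$ rational zeros of $z_1+1$ when $m\geq 2$ and another $q$ rational zeros of $z_{m-1}+1$ when $m\geq 3$ (these two sets being distinct only for $m\geq 3$, hence the cases in the definition of $\varepsilon_m^*$). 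Then, for $d\in K^\times$ not lying in $\{b^{q-1}:b\in K^\times\}$, Proposition~\ref{5prop2}(c) gives exactly one rational extension per $d$; since the $(q-1)$-th powers form the subgroup of $K^\times$ of order $q+1$, this yields $(q^2-1)-(q+1)=q^2-q-2$ rational places.

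Finally I would handle the completely decomposing values $d\in\{b^{q-1}:b\in K^\times\}\setminus\{\pm 1\}$ via Proposition~\ref{5prop2}(b). Each such $d$ contributes $q^m$ rational places, since $[Z_m:Z_0]=q^m$ and complete decomposition over a rational place produces only rational extensions. This is the step where I expect the main subtlety: in even characteristic $-1=1$ and the decomposing set already has $q$ elements, but in odd characteristic $\pm 1$ are distinct and the set appears to have only $q-1$ elements. To repair the count I would observe that the Kummer argument underlying part (b) in fact also applies to $d=1$ in odd characteristic: taking $b=1$, for $c\in\mathbb{F}_q^\times$ one has $cb=c\notin A$ because $c+c^q=2c\neq 0$, so the zeros of $x_0-c$ completely decompose in $T_m/T_0$, and hence the zero of $z_0-1$ completely decomposes in $Z_m/Z_0$. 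This restores the count to $q$ decomposing values in both parities, contributing $q\cdot q^m=q^{m+1}$ rational places.

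Summing the four contributions yields $2+(2+\varepsilon_m^*)+(q^2-q-2)+q^{m+1}=q^{m+1}+q^2-q+2+\varepsilon_m^*$, as claimed. The main obstacle is the characteristic-dependent bookkeeping around $d=\pm 1$; once that is disentangled, the case analysis unifies cleanly and the claim follows by straightforward addition.
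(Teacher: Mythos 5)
Your proof is correct and follows essentially the same route as the paper: both classify the rational places of $Z_m$ by their restriction $z_0\equiv d$ on $Z_0$ and sum the contributions from Proposition~\ref{5prop2}(a)--(d), the only cosmetic difference being that the paper counts the $q$ completely decomposing values of $d$ via divisibility of $(T^{q+1}-1)/(T+1)$ rather than via your subgroup-of-order-$q+1$ argument. Your explicit repair of the $d=1$ versus $d=-1$ bookkeeping in odd characteristic addresses a point that the statement of Proposition~\ref{5prop2}(b) leaves implicit, but it does not alter the approach.
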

\begin{proof} Proposition \ref{5prop2} yields a complete view over the rational places of $Z_m/K$. In the parts (a) and (d) we find $4+\varepsilon^*_m$ rational places. It remains to consider the rational zeros of $z_0-d$ with $d\neq 0,-1$. The zero divisor of $z_0-d$ with $d=b^{q-1}$ for some $b\in K$ is completely decomposed in $Z_m/Z_0$ and has $q^m$ rational prime divisors. For $b\neq b^{q-1}$ there is just one rational place dividing $(z_0-d)_0$. With $s:=\#\{ d\in K : d=b^{q-1}\neq 0,-1 \}$ we get
$$ N_1(Z_m/K) = s\cdot q^m + (q^2-2-s) + 4 + \varepsilon^*_m. $$
Due to \ $(b^{q-1})^{q+1}=b^{q^2-1}=1$ \ a $(q-1)$-th power is a zero of
$$ \left. T^q - T^{q-1} + T^{q-2} \mp \cdots + T -1 = \frac{T^{q+1}-1}{T+1} \ \right| \ (T^{q+1})^{q-1} - 1 = 
\prod\limits_{b\in K^{\times}} (T-b). $$
Hence $s=q$ fulfills. This finishes the proof.
\end{proof}
\begin{theorem}[Genus of the decomposition tower]
\ \label{5theorem2}

The decomposition tower $Z_m/K$ of height $m$ has genus $g_m/(q-1)$, where $g_m$ denotes the genus of the norm-trace tower.
\end{theorem}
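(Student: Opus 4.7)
The plan is to apply the Riemann--Hurwitz formula to the Kummer extension $T_m/Z_m$ of degree $q-1$ defined by $x_0^{q-1}=z_0$, whose degree is given by Theorem~\ref{5theorem0}(d). Since $p=\textup{char}\,K$ divides $q$ we have $\gcd(q-1,p)=1$, so this extension is tame; Kummer theory then tells me that a place $\mathfrak{R}$ of $Z_m$ is ramified exactly when $(q-1)\nmid v_{\mathfrak{R}}(z_0)$, and its contribution to $\deg(\textup{Diff}(T_m/Z_m))$ equals $(q-1-d_{\mathfrak{R}})\deg(\mathfrak{R})$ with $d_{\mathfrak{R}}:=\gcd(v_{\mathfrak{R}}(z_0),q-1)$. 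The whole proof therefore reduces to establishing $\deg(\textup{Diff}(T_m/Z_m))=2(q-2)$.

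I would walk through the support of $(z_0)$ in $Z_m$ place by place. By Proposition~\ref{5prop2}(a) the place $\mathfrak{P}_{\infty,m}$ is totally ramified in $T_m/Z_m$, so the pole of $z_0$ in $Z_m$ is a single rational place $\mathfrak{R}_\infty$ with $v_{\mathfrak{R}_\infty}(z_0)=-q^m$; since $q^m\equiv 1\pmod{q-1}$ this yields $d_{\mathfrak{R}_\infty}=1$ and a contribution of $q-2$. On the zero side, the dichotomy used in the proof of Proposition~\ref{5prop2}(d) classifies every zero of $z_0$ in $Z_m$ as either the common zero $\mathfrak{Q}^*_{0,m}$ of $z_0,\ldots,z_m$ or an extension in $Z_m$ of some $\mathfrak{Q}^*_{-1,i}$ with $1\le i\le m$. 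The inductive ramification diagram in that proof gives $v_{\mathfrak{Q}^*_{0,m}}(z_0)=1$ (every step $\mathfrak{Q}^*_{0,i}/\mathfrak{Q}^*_{0,i-1}$ is unramified) and $v_{\mathfrak{Q}^*_{-1,i}}(z_0)=q-1$ (the ramified step of index $q-1$ over $\mathfrak{Q}^*_{0,i-1}$ multiplies the previous valuation $v_{\mathfrak{Q}^*_{0,i-1}}(z_0)=1$). Any extension $\mathfrak{R}\mid\mathfrak{Q}^*_{-1,i}$ in $Z_m$ therefore satisfies $v_{\mathfrak{R}}(z_0)=e'(q-1)$ with $e'=e_{\mathfrak{R}}(Z_m/Z_i)$, so $(q-1)\mid v_{\mathfrak{R}}(z_0)$, $d_{\mathfrak{R}}=q-1$, and the contribution is zero. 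Only $\mathfrak{Q}^*_{0,m}$ adds another $q-2$.

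Summing, $\deg(\textup{Diff}(T_m/Z_m))=2(q-2)$, so Riemann--Hurwitz
$$2g_m-2=(q-1)\bigl(2g(Z_m)-2\bigr)+2(q-2)$$
collapses immediately to $g(Z_m)=g_m/(q-1)$. The main obstacle will be making the second paragraph watertight: I must check that the dichotomy exhausts all (not only the rational) zeros of $z_0$ in $Z_m$ and that the divisibility $v_{\mathfrak{Q}^*_{-1,i}}(z_0)\equiv 0\pmod{q-1}$ is preserved by every further ramification in $Z_m/Z_i$. Both facts are implicit in the induction of Proposition~\ref{5prop2}(d) and the ramification diagram of Figure~\ref{Fig2}, but the argument has to be recast so that it applies to the entire zero divisor of $z_0$ rather than just to its rational part.
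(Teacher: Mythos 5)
Your proposal is correct and follows essentially the same route as the paper: apply the Hurwitz formula to the tame Kummer extension $T_m/Z_m$, use the Kummer ramification formula to reduce everything to the valuations of $z_0$ on its support in $Z_m$, and invoke Proposition~\ref{5prop2} to show that only the pole and the common zero $\mathfrak{Q}^*_{0,m}$ contribute, each with different degree $q-2$. Your second paragraph merely spells out in more detail (for general $i$) the divisibility $v(z_0)\equiv 0\pmod{q-1}$ at the places over $\mathfrak{Q}^*_{-1,i}$, which the paper dispatches in one sentence.
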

\begin{proof} (See also \cite[Lemma 4]{BezGarNGT}) By the Huwitz formula we obtain the genus $g_m^*$ of $Z_m/K$ due to
$$ [T_m:Z_m](g_m^*-1) = g_m - \frac{\mathfrak{D}(T_m/Z_m)}{2} - 1. $$
According to the theory of Kummer extensions (see \cite[Proposition III.7.3]{Sti}) only poles and zeros of $z_0$ ramify in $T_m/Z_m$ with index
$$ e_{\mathfrak{P}}(T_m/Z_m) = \frac{[T_m:Z_m]}{\textup{gcd}([T_m:Z_m],v_{\mathfrak{P}\cap Z_m}(z_0) )} $$
where $\textup{gcd}(\cdot,\cdot)$ denotes the positive greatest common divisor. The resulting different exponent is
$$ d_{\mathfrak{P}}(T_m/Z_m) = e_{\mathfrak{P}}(T_m/Z_m)-1. $$
The pole $\mathfrak{P}_{\infty}$ of $z_0$ is totally ramified in $T_m/Z_0$ and has different exponent $d_{\mathfrak{P}_{\infty}}=q-2$. The only zero of $z_0$ with $\textup{gcd}([T_m:Z_m],v_{\mathfrak{Q}\cap Z_m}(z_0) )\neq q-1$ is $\mathfrak{Q}|\mathfrak{Q}_{0,m}^*$ because the other zeros are ramified in $Z_1/Z_0$ with index $q-1$ by Proposition \ref{5prop2}. Hence $d_{\mathfrak{Q}}(T_m/Z_m)=q-2$ and
$$ \deg(\mathfrak{D}(T_m/Z_m)) = d_{\mathfrak{P}_{\infty}}(T_m/Z_m) + d_{\mathfrak{Q}}(T_m/Z_m) = 2(q-2). $$
Hence we get $g_m^*=g_m/[T_m:Z_m] = g_m/(q-1)$.
\end{proof}

\begin{theorem}[Weierstra\ss\ semigroup of $\mathfrak{P}_{\infty}^*=\mathfrak{P}_{\infty}\cap Z_m$]
\ \label{5theorem3}

The place $\mathfrak{P}_{\infty}^*$ has the Weierstra\ss\ semigroup $\mathbb{H}_m^*=q\cdot \mathbb{H}_{m-1} \cup \{ n \geq c_m^*\}$ with conductor $c_m^*=c_m/(q-1)$, where $c_m$ denotes the conductor of the Weierstra\ss\ semigroup of $\mathfrak{P}_{\infty}$ in the norm-trace tower. 
\end{theorem}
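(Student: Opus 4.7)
The plan is to reduce everything to the equivalence
\[
n\in\mathbb{H}_m^*\iff (q-1)\,n\in\mathbb{H}_m,
\]
which is precisely Theorem~\ref{Hauptsatz4}(c). Once this equivalence is available, the claimed recursive description of $\mathbb{H}_m^*$ and the identification $c_m^*=c_m/(q-1)$ both drop out of the norm-trace recursion $\mathbb{H}_m=q\cdot\mathbb{H}_{m-1}\cup\{n\geq c_m\}$ of Proposition~\ref{2prop2}(a) together with $\gcd(q-1,q)=1$: the condition $(q-1)n\in q\mathbb{H}_{m-1}$ forces $q\mid n$ and places the quotient in $\mathbb{H}_{m-1}^*$ by induction on $m$, while $(q-1)n\geq c_m$ is equivalent to $n\geq c_m^*$. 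To see the conductor claim is sharp, observe that $(q-1)(c_m^*-1)=c_m-(q-1)\equiv 1\pmod q$ (because $q\mid c_m$), so it is neither in $q\mathbb{H}_{m-1}$ nor reaches $c_m$, and hence is a gap of $\mathbb{H}_m$; via the equivalence this witnesses that $c_m^*-1$ is a gap of $\mathbb{H}_m^*$.

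The direction $(\Rightarrow)$ is immediate from the ramification already computed in the proof of Theorem~\ref{5theorem2}, where $\mathfrak{P}_\infty$ is shown to be totally ramified in $T_m/Z_m$ with ramification index $q-1$. For every $f\in Z_m$ this gives $v_{\mathfrak{P}_\infty}(f)=(q-1)\,v_{\mathfrak{P}_\infty^*}(f)$, and any $f\in Z_m$ with $(f)_\infty=n\mathfrak{P}_\infty^*$ becomes an $f\in T_m$ with $(f)_\infty=(q-1)n\,\mathfrak{P}_\infty$, so $(q-1)n\in\mathbb{H}_m$.

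For the direction $(\Leftarrow)$ I would use a gap-counting argument anchored on Theorem~\ref{5theorem2}. Write $G^*$ for the gap set of $\mathbb{H}_m^*$ and $G_0=\{n\geq 1:(q-1)n\notin\mathbb{H}_m\}$; the contrapositive of $(\Rightarrow)$ gives $G_0\subseteq G^*$, and Theorem~\ref{5theorem2} yields $|G^*|=g(Z_m/K)=g_m/(q-1)$, so it is enough to prove $|G_0|=g_m/(q-1)$. Unpacking $\mathbb{H}_m=q\cdot\mathbb{H}_{m-1}\cup\{n\geq c_m\}$, the gaps of $\mathbb{H}_m$ lie entirely in $[1,c_m-1]$ and split into (i) integers coprime to $q$ and (ii) multiples $qk$ with $k$ a gap of $\mathbb{H}_{m-1}$. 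The formulas of Proposition~\ref{2prop2}(a) show $q(q-1)\mid c_m$, which makes the inclusion-exclusion on the multiples of $q-1$ inside $[1,c_m-1]$ clean: type (i) contributes exactly $c_m^*-c_m/(q(q-1))=c_m/q$ gaps divisible by $q-1$, and $\gcd(q,q-1)=1$ makes $qk$ divisible by $q-1$ iff $k$ is, so type (ii) contributes $g_{m-1}/(q-1)$ by the induction hypothesis. Summing gives $|G_0|=c_m/q+g_{m-1}/(q-1)$, which matches $g_m/(q-1)$ by the parallel recursion $g_m=\tfrac{q-1}{q}c_m+g_{m-1}$ that is read off from the same description of $\mathbb{H}_m$.

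The main obstacle is this combinatorial book-keeping; the geometric heavy lifting has already been done in the proof of Theorem~\ref{5theorem2}, which supplies both the ramification identity driving $(\Rightarrow)$ and the genus formula $g(Z_m/K)=g_m/(q-1)$ needed to anchor $(\Leftarrow)$.
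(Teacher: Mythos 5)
Your proposal is correct and follows essentially the same strategy as the paper's proof: use the total ramification of $\mathfrak{P}_{\infty}$ in $T_m/Z_m$ (from the proof of Theorem \ref{5theorem2}) to show that every pole number $t$ of $\mathfrak{P}_{\infty}^*$ satisfies $(q-1)t\in\mathbb{H}_m$, then force equality by matching the number of gaps of the candidate set against the genus $g_m/(q-1)$, with the combinatorial equivalence $n\in\mathbb{H}_m^*\Leftrightarrow (q-1)n\in\mathbb{H}_m$ linking the two recursions. The only real difference is that where the paper disposes of the gap count by substitution into the argument of the cited reference, you carry it out explicitly via $g_m=\frac{q-1}{q}c_m+g_{m-1}$ and the divisibility $q(q-1)\mid c_m$ (note only that your phrase ``integers coprime to $q$'' should read ``integers not divisible by $q$'', since $q$ is a prime power).
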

\begin{proof} First of all the number of gaps $\tilde{g}_m^*$ in $\mathbb{H}_m^*$ coincides with the genus $g_m^*$. This can be checked as in \cite{PellStiTorWS} by substituting $g_m\mapsto g_m^*=g_m/(q-1)$ and $c_m\mapsto c_m^*=c_m/(q-1)$. Therefore $\mathbb{H}_m^*$ comes into consideration for the Weierstra\ss\ semigroup of $\mathfrak{P}_{\infty}^*$. For the rest of the proof it remains to show that $\mathbb{H}_m^*$ contains all pole numbers of $\mathfrak{P}_{\infty}^*$. We show that an integer $n$ is contained in $\mathbb{H}_m^*$ if and only if $n(q-1)$ is contained in $\mathbb{H}_m$. 

For $m=0$ there is nothing to show. Using induction we assume that the assertion is valid for $m-1$ with $m>0$. The integer $n$ is contained in $\mathbb{H}_m^*$ if and only if $n/q\in \mathbb{H}_{m-1}^*$ or $n\geq c_m^*=c_m/(q-1)$ holds. By the induction hypothesis this is equivalent to $n(q-1)/q \in \mathbb{H}_{m-1}$ or $n(q-1)\geq c_m$. Either way, $n(q-1)$ is contained in $\mathbb{H}_m$ and the induction is complete.

Now we can conclude that every pole number is contained in $\mathbb{H}_m^*$. For any pole number $t$ of $\mathfrak{P}_{\infty}^*$ we have $t(q-1)\in \mathbb{H}_m$, because $\mathfrak{P}_{\infty}^*$ totally ramifies in $T_m/Z_m$ with index $q-1$. With the above we finally get $t\in \mathbb{H}_m^*$. 
\end{proof}
\begin{remark} The following assertions hold for all intermediate towers $S_m/K$ with $T_m \geq S_m \geq Z_m$.
\begin{enumerate}
\item[\textup{(a)}] There is a divisor $r$ of $q-1$ with $S_m=K(x_0^r,\ldots,x_m^r)$ and $r=[T_m:S_m]$. For $s_i:=x_i^r$ the intermediate tower $S_m/K$ is generated by
$$ s_{i+1} \cdot \left( s_{i+1}^{(q-1)/r} + 1 \right)^r = \frac{s_i^q}{( s_i^{(q-1)/r} + 1 )^r} 
\qquad \textup{for } i=0,\ldots,m-1.$$
\item[\textup{(b)}] The tower $S_m/K$ of height $m$ has genus $g_m/r$.
\item[\textup{(c)}] The tower $S_m/K$ of height $m$ has 
$$ N_1(S_m/K) = (q^{m+1} + \varepsilon^*_m )\cdot (q-1)/r + k $$
rational places with $2q \leq k \leq q^2-q+2$.
\item[\textup{(d)}] An integer $n$ is a pole number of $\mathfrak{P}_{\infty}$ in $S_m/K$ if and only if $n\cdot r$ is a pole number of $\mathfrak{P}_{\infty}$ in $T_m/K$.
\end{enumerate}
\end{remark}

\section{The Full Automorphism Group}\label{FullAutomorphism}
In the sections \ref{DecompositonGroup} and \ref{Decomposition} we have computed a subgroup of $G_m=\textup{Aut}(T_m/K)$ which we conjecture to be the entire automorphism group of $T_m/K$. 

\begin{conjecture}\label{conjecture}
The automorphism group of the norm-trace tower with height $m\geq 1$ has order
$$ | \textup{Aut}(T_m/K) | = 
\left\{ \begin{array}{cl} 
2q^2(q-1) & \ \text{for}  \ q\geq 3 \ \text{odd} \ \ \text{ or } \  m=1 \\
q^3(q^2-1) & \ \text{for} \ q\geq 4 \ \text{even}  \ \text{and} \ m = 2 \\
2q^4(q-1) & \ \text{for} \ q\geq 4 \ \text{even} \ \text{and} \ m\geq 3.
\end{array}\right.
$$
\end{conjecture}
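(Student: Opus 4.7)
The lower bounds in the conjecture are already supplied by Theorems \ref{Hauptsatz1} and \ref{Hauptsatz2}: every automorphism $\sigma$ lying in the subgroup generated by the explicit maps $\sigma\in G_m(\mathfrak{P}_{\infty})$ together with the conjugating automorphisms $\tau$ exhibited in Proposition \ref{4prop1} contributes to $G_m$, giving the stated cardinality. The task is therefore to prove the matching \emph{upper bound}, i.e. to show that \emph{any} $\sigma\in\textup{Aut}(T_m/K)$ must send $\mathfrak{P}_{\infty}$ into the support of $x_0^q+x_0$. Once this is established, the orbit–stabilizer formula $|G_m|=r_{\infty}\cdot|G_m(\mathfrak{P}_{\infty})|$ together with Theorem \ref{3theorem1} gives the result.

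The key invariant to exploit is the Weierstra\ss{} semigroup of Proposition \ref{2prop2}. Since an automorphism preserves Weierstra\ss{} semigroups, if $\mathfrak{Q}=\sigma(\mathfrak{P}_{\infty})$ then $\mathbb{H}(\mathfrak{Q})=\mathbb{H}_m$; in particular the Riemann-Roch space $\mathcal{L}(\mathfrak{Q}^{q^m})$ is two-dimensional and $\sigma(x_0)$ spans it together with $1$. First I would use Proposition \ref{2prop1}(a) to argue that $\sigma$ permutes the totally ramified places of $T_m/T_0$: the divisor of $\sigma(x_0^{q-1}+1)$ has support in the totally ramified locus, and counting pole orders at $\mathfrak{Q}$ forces $\sigma(x_0)$ to be, up to a $K$-linear change, a M\"obius transform of $x_0$ (essentially the argument already used inside Theorem \ref{3theorem1}). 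This pins down $\mathfrak{Q}\cap T_0$ to be either the pole or a zero of $x_0-a$ for $a\in A$, hence $\mathfrak{Q}$ lies over $x_0^q+x_0$.

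The remaining step is to enumerate the rational extensions of these base places in $T_m$ and show none of them produces a \emph{new} orbit element beyond those already recognized in Proposition \ref{4prop1}. For odd $q$ (any $m$) and for $m=1$, this is essentially Proposition \ref{2prop1}(b) combined with \ref{2prop1a}: the ramification/splitting behaviour forces $\mathfrak{Q}$ to be one of the $\mathfrak{P}_a$ or $\mathfrak{Q}_{b,m}$, which are all accounted for. For even $q$ and $m\ge 2$ one must additionally recognise the $q$ rational zeros of $x_1^{q-1}+1$ (and symmetrically of $x_{m-1}^{q-1}+1$); Proposition \ref{4prop1} already supplies conjugating automorphisms for these, and the proportionality established in Proposition \ref{5prop2}(d) ensures nothing further appears. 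Combining these observations yields $r_{\infty}$ equal to the stated lower bound, and multiplication by $|G_m(\mathfrak{P}_{\infty})|$ of Theorem \ref{3theorem1} gives the conjectured orders.

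The hard part is the \emph{uniqueness} step that identifies $\sigma(x_0)$ up to the M\"obius action: it relies on knowing that no rational place $\mathfrak{Q}\subset T_m$ outside the support of $x_0^q+x_0$ can carry the full Weierstra\ss{} semigroup $\mathbb{H}_m$. In principle this follows from a careful bound on the pole orders at completely split places versus at ramified ones, but for general $m$ the combinatorial control over the semigroups of the many split rational places becomes delicate. This is precisely why the unconditional proof in Section \ref{FullAutomorphism} is restricted to $m\le 2$ in odd characteristic and $m\le 4$ in even characteristic (Theorem \ref{Hauptsatz3}): in each of these small cases one can perform the enumeration of rational places explicitly using Propositions \ref{2prop1} and \ref{2prop1a}, whereas a uniform argument for all $m$ would require a systematic description of Weierstra\ss{} semigroups at \emph{every} rational place of $T_m/K$, which is currently out of reach.
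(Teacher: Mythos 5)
The statement you are addressing is stated in the paper as a conjecture, and the paper itself only proves it for $m\le 2$ (odd $q$, excluding $q=3$) and $m\le 4$ (even $q$); your plan likewise does not close the general case, so the real question is whether your route would at least recover the proven cases. As written, it would not. The pivotal step --- that any $\mathfrak{Q}=\sigma(\mathfrak{P}_{\infty})$ must lie over $x_0^q+x_0$ --- is exactly the missing ingredient, and your justification for it is circular: Proposition \ref{2prop1}(a) describes the totally ramified locus of $T_m/T_0$, but $T_0=K(x_0)$ is not canonically attached to $T_m$, so that locus is only permuted by automorphisms already known to stabilize $K(x_0)$. In the proof of Theorem \ref{3theorem1} this stabilization is deduced \emph{from} $\sigma(\mathfrak{P}_{\infty})=\mathfrak{P}_{\infty}$, i.e.\ from the very hypothesis you are trying to remove. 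Your fallback, invariance of the Weierstra\ss{} semigroup, is precisely the approach the paper identifies as unavailable for $m\ge 2$: Aleschnikov's $m=1$ argument uses that $q$ is a gap at every rational place outside $x_0^q+x_0$, but for $m\ge 2$ the semigroups of the completely split rational places are unknown (the paper has only \textsc{Magma} evidence that $q^m$ is a gap there for $m=2$ and $q\le 9$). You acknowledge this for general $m$ but then assert that for small $m$ ``one can perform the enumeration of rational places explicitly''; enumerating the places does not substitute for computing their gap sequences.

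The paper's actual proof of the accessible cases (Theorem \ref{6theorem1}) takes an entirely different route that avoids Weierstra\ss{} semigroups altogether. It first establishes a dichotomy (Propositions \ref{6prop0} and \ref{6prop1}): a conjugate of $\mathfrak{P}_{\infty}$ outside $x_0^q+x_0$ would be unramified, hence completely decomposed, in $T_m/Z^1_{m-\varepsilon-1}$, forcing $r_{\infty}\ge q^{\varepsilon+1}(q-1)$. It then applies the Hurwitz genus formula to $T_m/T_m^{G_m}$, with a case analysis on $g(T_m^{G_m})$ and on the nature of the remaining ramified places (using Corollary \ref{6cor1} to guarantee a wildly ramified place not conjugate to $\mathfrak{P}_{\infty}$), to derive upper bounds on $r_{\infty}$ that are incompatible with $q^{\varepsilon+1}(q-1)$ when $m$ is small. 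This forces $r_{\infty}$ to equal the lower bound of Proposition \ref{4prop1}, and orbit--stabilizer finishes the argument. If you want to salvage your plan for the small-$m$ cases, this dichotomy-plus-Hurwitz mechanism is the piece you need to import; the semigroup route would require genuinely new information about the gap sequences at the split places.
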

This extends Aleschnikov's result for $m=1$ in \cite{AleGB}. He proved that every rational place of $T_1/K$ outside $x_0^q+x_0$ has gap number $q$ and hence these places cannot be conjugated to $\mathfrak{P}_{\infty}$. For $m\geq 2$ and $q\geq 3$ it might also hold that $q^m$ is a gap number of the rational places outside $x_0^q+x_0$, but their Weierstra\ss\ groups are still unknown. With the computer algebra system \textsc{Magma} we checked that $q^m$ is a gap number outside $x_0^q+x_0$ for $m=2$ and $q=3,\ldots,9$. In  contrast we also checked that $q^m$ is a pole number outside $x_0^q+x_0$ for $q=2$ and $m=1,2,3$. Therefore $q=2$ is conjecturally an exceptional case. For $(m,q)=(1,2)$ the norm-trace tower is elliptic and hence its automorphism group is known and does not coincide with our exhibited group. For $m=2,3$ we queried \textsc{Magma} and received the result that $T_2/\mathbb{F}_4$ has $168$ automorphisms and $T_3/\mathbb{F}_4$ has $96$ automorphisms.
\\[0.1cm]
In this section we present a proof that the exhibited automorphisms generate the full automorphism group of the norm-trace tower with height $m=2$ in odd characteristic and $m=2,3,4$ in even characteristic respectively. We will use the Hurwitz formula for the relative genus \cite[Theorem III..4.12]{Sti} of $T_m/T_m^{G_m}$ in order to achieve bounds for the decomposition index $r_{\infty}$. 
First we consider the ramification of the field extensions
$$ T_m\geq T_{m-\varepsilon} \geq Z_{m-\varepsilon}=T_m^{G_m(\mathfrak{P}_{\infty})} \geq T_m^{G_m(\mathfrak{P}_{\infty}\prod\mathfrak{P}_a)} =Z_{m-\varepsilon-1}^1. $$

\begin{proposition}\label{6prop0} The extension $T_m/Z_{m-\varepsilon-1}^1$ is unramified outside $x_0^q+x_0$ for $m\geq 1 + \varepsilon$. 
\end{proposition}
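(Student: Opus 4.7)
The plan is to decompose the extension $T_m/Z_{m-\varepsilon-1}^1$ via the intermediate field $F := K(x_1, x_2, \ldots, x_{m-\varepsilon})$. By Theorem \ref{5theorem0} we have $z_i = x_i^{q-1}$, so
$$ T_m \supseteq F \supseteq K(z_1, \ldots, z_{m-\varepsilon}) = Z_{m-\varepsilon-1}^1, $$
and it suffices to check that each of $T_m/F$ and $F/Z_{m-\varepsilon-1}^1$ is unramified outside the support of $x_0^q+x_0$.

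For the upper step, Proposition \ref{2prop1} asserts that $T_m/K(x_1)$ is unramified outside $x_0^q+x_0$. Since $K(x_1) \subseteq F$, the multiplicativity of the ramification index in the chain $T_m \supseteq F \supseteq K(x_1)$ forces $T_m/F$ to be unramified at every place of $T_m$ sitting outside the support of $x_0^q+x_0$.

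For the lower step, $F$ is obtained from $Z_{m-\varepsilon-1}^1$ by successively adjoining the $(q-1)$-th roots $x_i$ of the $z_i$ for $i=1,\ldots,m-\varepsilon$, yielding a composite of Kummer extensions of exponent $q-1$ (coprime to the characteristic). Standard Kummer ramification theory (cf.\ \cite[Proposition III.7.3]{Sti}) then implies that this composite is unramified outside the places sitting above the zeros or poles of some $z_i$; these coincide with the zeros and poles of the corresponding $x_i$.

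It remains to verify that the zeros and poles of $x_1, \ldots, x_{m-\varepsilon}$ in $T_m$ all lie above the support of $x_0^q+x_0$ in $T_0$. From the tower relation $x_i^q+x_i = x_{i-1}^q/(x_{i-1}^{q-1}+1)$ a zero of $x_i$ forces $x_{i-1}^q \equiv 0$, so inductively every zero of $x_i$ lies over a zero of $x_0$. A pole of $x_i$ forces either a pole of $x_{i-1}$ or $x_{i-1} \equiv a$ for some $a \in A^\times$; in the latter case the relation at stage $i-1$ yields $x_{i-2} \equiv 0$, which recurses down to $x_0 \equiv 0$. Hence each such place lies over either $\mathfrak{P}_\infty$ or a zero of $x_0$, both of which belong to the support of $x_0^q+x_0 = x_0(x_0^{q-1}+1)$. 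The main technical work is only in organizing this inductive case analysis; the Kummer and multiplicativity arguments themselves are routine.
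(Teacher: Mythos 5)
Your proof is correct, but it takes a genuinely different route from the paper's. Both arguments in fact pass through the same auxiliary field $K(x_1,\ldots,x_{m-\varepsilon})$ (which the paper calls $T^1_{m-\varepsilon-1}$), yet they use it differently. The paper splits off $T_m/Z_{m-\varepsilon}$, whose ramification locus it already knows from the genus computation (Theorem \ref{5theorem2}: only $\mathfrak{P}_{\infty}$ and the common zero of $z_0,\ldots,z_{m-\varepsilon}$ ramify in $T_n/Z_n$), transfers that exact answer to $T^1_{m-\varepsilon-1}/Z^1_{m-\varepsilon-1}$ via the shift isomorphism $x_{i-1}\mapsto x_i$, and then reads off the remaining step $Z_{m-\varepsilon}/Z^1_{m-\varepsilon-1}$ from the resulting diamond. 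You instead put $T^1_{m-\varepsilon-1}$ directly into the chain, dispose of the top step with Proposition \ref{2prop1} (as does the paper, implicitly), and treat the bottom step as a compositum of tame Kummer extensions, needing only the coarse statement that its ramification is confined to the zeros and poles of the $z_i$, plus the elementary zero/pole propagation along the tower relations. What the paper's route buys is brevity given the earlier sections (it recycles the precise different computation already done); what yours buys is self-containedness — no appeal to the genus theorem — at the cost of invoking Abhyankar's lemma (or the tame-compositum ramification statement) and redoing the zero/pole bookkeeping, which is essentially the content of Proposition \ref{2prop1}(b). Two small points of hygiene: (i) the Kummer step should be phrased for the cyclic subextensions $Z^1_{m-\varepsilon-1}(x_i)/Z^1_{m-\varepsilon-1}$ of degree dividing $q-1$ (the polynomial $T^{q-1}-z_i$ need not be irreducible), though this changes nothing since the relevant valuations are still read off $z_i$; and (ii) in the pole analysis the base case $x_0\equiv a\in A^{\times}$ produces a zero of $x_0^{q-1}+1$ rather than of $x_0$ or $\mathfrak{P}_{\infty}$ — your final clause about the support of $x_0^q+x_0=x_0(x_0^{q-1}+1)$ covers it, but the preceding sentence should list it explicitly.
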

\begin{proof} We already know that this assertion is true for the extension $T_m/Z_{m-\varepsilon}$ 
by Proposition \ref{2prop1} and the proof of Theorem \ref{5theorem1}. Hence it remains to consider the extension $Z_{m-\varepsilon}/Z_{m-\varepsilon-1}^1$. For abbreviation we substitute $m-\varepsilon \mapsto m$ and define $T_{m-1}^1:=K(x_1,\ldots,x_m)$. This field is isomorphic to $T_{m-1}$ as well as $Z_{m-1}$ is isomorphic to $Z_{m-1}^1$ under the map $x_{i-1}\mapsto x_i$ for $1\leq i\leq m$. By the proof of Theorem \ref{5theorem1} the ramifying places of $T_{m-1}/Z_{m-1}$ are the pole of $z_0$ and the common zero of $z_0,\ldots,z_{m-1}$. Using the isomorphism above we obtain that the pole of $z_1$ and the common zero of $z_1,\ldots,z_m$ are the ramifying places of $T_{m-1}^1/Z_{m-1}^1$. These places are contained in the support of $x_0^q+x_0$. Hence $Z_{m-1}/Z_{m-1}^1$ is unramified outside $x_0^q+x_0$, because $T_{m-1}/T_{m-1}^1$ is unramified outside $x_0^q+x_0$ by Proposition \ref{2prop1}. 
\end{proof}

By this proof we also get a key statement for the proof of Theorem \ref{6theorem1}.
\begin{corollary}\label{6prop2}\label{6cor1}
There is at least one wildly ramified place in $T_m/T_m^{G_m}$ not conjugated to $\mathfrak{P}_{\infty}$ for $m\geq 2\varepsilon$.
\end{corollary}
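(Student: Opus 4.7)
The strategy is to exhibit a non-rational place $\mathfrak{R}$ of $T_m$ lying in the support of $x_0^q+x_0$ and wildly ramified in $T_m/T_m^{G_m}$. Since $\mathfrak{P}_{\infty}$ is rational and automorphisms of $T_m/K$ preserve residue degrees, any non-rational $\mathfrak{R}$ is automatically outside the $G_m$-orbit of $\mathfrak{P}_{\infty}$.

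For the construction of $\mathfrak{R}$, I would apply Proposition \ref{2prop1}(b) at $i=\varepsilon$: for $m\geq 2\varepsilon$, a zero of $x_\varepsilon - a$ with $a\in A^\times$ acquires a non-trivial relative degree in $T_{\varepsilon+1}/T_\varepsilon$ for odd $q$, respectively in $T_{\varepsilon+2}/T_{\varepsilon+1}$ for even $q$ (using that $\varepsilon=2\geq 2$), so any extension $\mathfrak{R}$ of such a zero in $T_m$ is non-rational. The condition $x_\varepsilon=a\in A^\times$ propagates through the Artin--Schreier relations to force $x_0=0$ at $\mathfrak{R}$, confirming that $\mathfrak{R}$ supports $x_0^q+x_0$.

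To establish wild ramification of $\mathfrak R$ in $T_m/T_m^{G_m}$, I would use the inclusions $T_m^{G_m}\subseteq Z_{m-\varepsilon}\subseteq T_{m-\varepsilon}$ coming from Theorem \ref{5theorem0}(a). For $m\geq 2\varepsilon+1$ these yield $T_m^{G_m}\subseteq T_{m-1}$, and Proposition \ref{2prop1}(b) shows that $\mathfrak{R}$ is totally ramified in $T_m/T_{2\varepsilon}$ of index $q^{m-2\varepsilon}$, so the single Artin--Schreier step $T_m/T_{m-1}$ contributes a ramification index of $q$ at $\mathfrak R$; this is wild and propagates to $T_m/T_m^{G_m}$. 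For the boundary case $m=2\varepsilon$ the step $T_m/T_{m-1}$ is unramified at $\mathfrak R$, so I would instead use the inclusion $T_m^{G_m}\subseteq Z_{m-\varepsilon-1}^1$ from Theorem \ref{5theorem0}(b) and analyse the degree-$q$ extension $Z_{m-\varepsilon}/Z_{m-\varepsilon-1}^1=Z_{m-\varepsilon-1}^1(z_0)$ governed by $z_0^q=z_1(z_1+1)^{q-1}(z_0+1)^{q-1}$ from Theorem \ref{5theorem0}(c). At $\mathfrak{R}\cap Z_{m-\varepsilon-1}^1$ one has $z_1=-1$ (odd $q$) or $z_1=0$ (even $q$), and the valuation equality
\[
q\cdot v(z_0)=v(z_1)+(q-1)v(z_1+1)
\]
combined with the ramification structure of the decomposition tower from Proposition \ref{5prop2} (via the isomorphism $Z_{m-\varepsilon-1}^1\cong Z_{m-\varepsilon-1}$ used in the proof of Proposition \ref{6prop0}) shows that the right-hand side has valuation coprime to $q$ at this place, forcing the ramification index $e$ to equal $q$, hence wild.

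The main obstacle I expect is the boundary case $m=2\varepsilon$: the natural source of wild ramification in the norm-trace tower vanishes there, and one must instead pass to the decomposition tower and verify by an Eisenstein-type analysis that the degree-$q$ extension $Z_{m-\varepsilon}/Z_{m-\varepsilon-1}^1$ genuinely contributes the needed wild ramification at $\mathfrak{R}\cap Z_{m-\varepsilon-1}^1$.
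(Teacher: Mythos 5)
Your argument is correct, but it is organized differently from the paper's. The paper works with a zero $\mathfrak{R}$ of $x_{m-\varepsilon}^{q-1}+1$ (the reflection-conjugate of your zero of $x_{\varepsilon}-a$; for $m=2\varepsilon$ the two choices coincide) and proves wild ramification uniformly for all $m\geq 2\varepsilon$ by one ``diamond'' argument: $\mathfrak{R}$ is totally ramified of index $q$ in $T_{m-\varepsilon}/T_{m-\varepsilon-1}^1$, and unramified in the two Kummer extensions $T_{m-\varepsilon}/Z_{m-\varepsilon}$ and $T_{m-\varepsilon-1}^1/Z_{m-\varepsilon-1}^1$ (by the proof of Proposition \ref{6prop0}), hence totally ramified of index $q$ in $Z_{m-\varepsilon}/Z_{m-\varepsilon-1}^1\supseteq$ the relevant piece of $T_m/T_m^{G_m}$; non-conjugacy to $\mathfrak{P}_{\infty}$ is read off from the non-trivial relative degree in $T_m/T_{m-1}$, exactly as in your residue-degree argument. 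Your route instead splits off the case $m\geq 2\varepsilon+1$, where the wild ramification is already visible inside the norm-trace tower itself via $e_{\mathfrak{R}}(T_m/T_{m-1})=q$ from Proposition \ref{2prop1}(b) -- this is more elementary than the paper's argument, needing nothing from Section \ref{DecompositionTower} -- and handles the boundary case $m=2\varepsilon$ by an explicit Eisenstein-type valuation computation on the relation $z_0^q=z_1(z_1+1)^{q-1}(z_0+1)^{q-1}$. That computation does close: in both subcases the right-hand side has valuation $e\cdot(q-1)$ (with $v(z_1+1)=1$ below for odd $q$, and $v(z_1)=q-1$ below for even $q$ via $\mathfrak{Q}^*_{-1,1}$ and the shift isomorphism), so $q\mid e(q-1)$ forces $e=q$; you should just state explicitly that it is the valuation \emph{measured in} $Z_{m-\varepsilon-1}^1$ that is coprime to $q$, since the valuation upstairs is $e(q-1)$ and is what you are solving for. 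The trade-off is that the paper's diamond argument avoids both the case split and the explicit computation, at the cost of invoking the unramifiedness statements from the proof of Proposition \ref{6prop0}, whereas your version makes the generic case essentially trivial and isolates all the work in the single boundary height $m=2\varepsilon$.
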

\begin{proof} A zero $\mathfrak{R}$ of $x_{m-\varepsilon}^{q-1}+1$ is totally ramified in $T_{m-\varepsilon}/T_{m-\varepsilon-1}^1$ and has a non-trivial relative degree in $T_m/T_{m-1}$ by Proposition \ref{2prop1}. Since $\mathfrak{R}$ is unramified in $T_{m-\varepsilon}/Z_{m-\varepsilon}$ as well as in $T_{m-\varepsilon-1}^1/Z_{m-\varepsilon-1}^1$ by the above proof, it is totally ramified in $Z_{m-\varepsilon}/Z_{m-\varepsilon-1}^1$. Hence the place $\mathfrak{R}$ is wildly ramified in $T_m/T_m^{G_m}$. Also it cannot be conjugated to $\mathfrak{P}_{\infty}$, because $\mathfrak{P}_{\infty}$ has relative degree $1$ in $T_m/T_m^{G_m}$. 
\end{proof}

Now we collect bounds for $r_{\infty}$ obtained by the results of section \ref{DecompositionTower}. 
\begin{proposition}\label{6prop1} The decomposition index $r_{\infty}$ of $\mathfrak{P}_{\infty}$ has one of the following properties:
\begin{enumerate}
\item[\textup{(a)}] $r_{\infty} = 2q$ or $q^2(q-1) \leq r_{\infty} \leq N_1(T_m)$ for odd $q$ and $m\geq 2$.
\item[\textup{(b)}] $r_{\infty} = q(q+1)$ or $q^2(q-1) \leq r_{\infty} \leq N_1(T_m)$ for even $q$ and $m=2$.
\item[\textup{(c)}] $r_{\infty} = 2q^2$ or $q^3(q-1) \leq r_{\infty} \leq N_1(T_m)$ for even $q$ or $m\geq 3$.
\end{enumerate}

\end{proposition}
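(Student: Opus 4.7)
The plan is to combine orbit-stabiliser counting with the explicit subgroup of $G_m$ supplied by the previous two sections. Write $H$ for the subgroup of $G_m$ generated by $G_m(\mathfrak{P}_{\infty})$ from Theorem \ref{3theorem1} and the conjugating automorphisms $\tau$ exhibited in Proposition \ref{4prop1}. By construction $|H|=r_0\cdot |G_m(\mathfrak{P}_{\infty})|$ with $r_0\in\{2q,\,q(q+1),\,2q^2\}$ the respective lower bound of Proposition \ref{4prop1}, and the $H$-orbit of $\mathfrak{P}_{\infty}$ is precisely the set $\mathcal{S}$ of rational places supporting $x_0^q+x_0$, so $|\mathcal{S}|=r_0$. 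The key observation is that $H$ is already the full setwise stabiliser of $\mathcal{S}$ in $G_m$: if $\sigma\in G_m$ maps $\mathcal{S}$ into $\mathcal{S}$, pick $\tau\in H$ with $\tau(\mathfrak{P}_{\infty})=\sigma(\mathfrak{P}_{\infty})$; then $\tau^{-1}\sigma\in G_m(\mathfrak{P}_{\infty})\subseteq H$, so $\sigma\in H$. Hence the full $G_m$-orbit $\mathcal{O}$ of $\mathfrak{P}_{\infty}$ either equals $\mathcal{S}$ (giving the first alternative $r_{\infty}=r_0$) or strictly contains it.

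Suppose we are in the second case and pick $\mathfrak{R}\in\mathcal{O}\setminus\mathcal{S}$. Since $H\subseteq G_m$, the entire $H$-orbit of $\mathfrak{R}$ lies inside $\mathcal{O}$ and is disjoint from $\mathcal{S}$, so
$$ r_{\infty} \;\geq\; r_0 \;+\; \frac{|H|}{|H_{\mathfrak{R}}|}, $$
where $H_{\mathfrak{R}}=H\cap G_m(\mathfrak{R})$. Because $\mathfrak{R}$ is $G_m$-conjugate to $\mathfrak{P}_{\infty}$, the group $G_m(\mathfrak{R})$ is $G_m$-conjugate to $G_m(\mathfrak{P}_{\infty})$, whence $|H_{\mathfrak{R}}|$ divides $q^{\varepsilon}(q-1)$; the upper bound $r_{\infty}\leq N_1(T_m)$ is immediate since $\mathcal{O}$ consists of rational places.

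The decisive step is to sharpen the crude bound $|H_{\mathfrak{R}}|\leq q^{\varepsilon}(q-1)$ to make $|H|/|H_{\mathfrak{R}}|$ meet the required threshold $q^2(q-1)-r_0$ in cases (a), (b) and $q^3(q-1)-r_0$ in case (c). I would exploit that $\mathfrak{R}\notin\mathcal{S}$ means $x_0(\mathfrak{R})\notin A\cup\{\infty\}$, with the analogous constraints on $x_1(\mathfrak{R})$ and $x_{m-1}(\mathfrak{R})$ in even characteristic coming from Proposition \ref{5prop2}(d). By the explicit form of $G_m(\mathfrak{P}_{\infty})$ in Theorem \ref{3theorem1}, the scaling action $x_0\mapsto cx_0$ only fixes $\mathfrak{R}$ for $c=1$ (since $x_0(\mathfrak{R})\neq 0$), and then the translation part is forced to have parameter $0$ by comparing $x_m$-coordinates; thus $G_m(\mathfrak{P}_{\infty})\cap H_{\mathfrak{R}}=\{1\}$. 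Consequently $|H_{\mathfrak{R}}|$ equals the number of cosets $\tau\, G_m(\mathfrak{P}_{\infty})$ with $\tau$ one of the conjugating automorphisms of Proposition \ref{4prop1} that contain a fixer of $\mathfrak{R}$, a condition that translates into explicit arithmetic equations on the parameters $a,b,c,d,e$ and the coordinates $x_i(\mathfrak{R})$. Working these out separately for the scaling-type $\tau$'s (sending $\mathfrak{P}_a\mapsto\mathfrak{P}_{\infty}$), the reflection-type $\tau$'s (sending $\mathfrak{Q}_{b,m}\mapsto\mathfrak{P}_{\infty}$), and — for even $q$ — the extra $\tau$'s associated to the rational zeros of $x_1+a$ or $x_{m-1}+a$, gives a uniform bound on $|H_{\mathfrak{R}}|$ that delivers the required inequality.

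The main obstacle is precisely this final case analysis: one has to exclude the possibility of an ``exceptional'' rational place $\mathfrak{R}$ whose coordinates $(x_0(\mathfrak{R}),\ldots,x_m(\mathfrak{R}))$ simultaneously satisfy many of the coupling equations produced by the $\tau$'s and thereby inflate $|H_{\mathfrak{R}}|$ into a range where $|H|/|H_{\mathfrak{R}}|$ would land strictly between $r_0$ and the large bound. To rule this out I would use the explicit minimal polynomial $\varphi_i(T)$ for $x_{i+1}/x_i$ over the relevant subfield, as computed in the proof of Proposition \ref{5prop2}(c), to pin down the $G_m(\mathfrak{P}_{\infty})$-orbit of $\mathfrak{R}$ and show that at most $O(1)$ of the conjugating $\tau$'s can admit a matching correction $\sigma\in G_m(\mathfrak{P}_{\infty})$ with $\tau\sigma(\mathfrak{R})=\mathfrak{R}$; that estimate then forces $r_{\infty}$ into the large range, as claimed.
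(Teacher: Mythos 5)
Your overall architecture is sound up to the point where the real work begins, and then it stops. The dichotomy (either the orbit of $\mathfrak{P}_{\infty}$ equals the set $\mathcal{S}$ of rational places supporting $x_0^q+x_0$, giving $r_{\infty}=r_0$, or some rational $\mathfrak{R}\notin\mathcal{S}$ is conjugate to $\mathfrak{P}_{\infty}$) is exactly the paper's dichotomy, and your observation that $G_m(\mathfrak{P}_{\infty})\cap G_m(\mathfrak{R})=\{1\}$ is correct. But the inequality you actually need, $|H|/|H_{\mathfrak{R}}|\geq q^{\varepsilon+1}(q-1)-r_0$, forces $|H_{\mathfrak{R}}|\leq 2$ in case (a) (for $q\geq 5$: from $2q^2(q-1)/|H_{\mathfrak{R}}|\geq q^2(q-1)-2q$) and $|H_{\mathfrak{R}}|\lesssim 2q$ in case (c), whereas your coset argument only yields $|H_{\mathfrak{R}}|\leq r_0=2q$ resp. $2q^2$ — off by a factor of $q$. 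The entire burden of the proof therefore sits in the ``decisive step,'' which you describe only as a programme (``I would exploit\ldots'', ``gives a uniform bound''), and even the programme's stated outcome — that ``at most $O(1)$'' of the conjugating cosets meet $G_m(\mathfrak{R})$ — is insufficient: an unspecified constant $C$ gives $r_{\infty}\geq 2q^2(q-1)/C$, which beats $q^2(q-1)$ only for $C\leq 2$. So there is a genuine gap, not just an omitted routine verification. (Only case (b) happens to survive on the crude bound $|H_{\mathfrak{R}}|\leq q(q+1)$, since there $q^3(q^2-1)/(q(q+1))=q^2(q-1)>q^2(q-1)-q(q+1)$.)

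The paper avoids all of this stabiliser bookkeeping with a ramification argument you did not use: by Proposition \ref{6prop0} the Galois extension $T_m/Z^1_{m-\varepsilon-1}$ (resp.\ $T_2/Z_0$ in case (b)) is unramified outside $x_0^q+x_0$, so a rational place $\mathfrak{R}$ outside $x_0^q+x_0$ has $e=f=1$ there and is completely decomposed; its orbit under $\mathrm{Gal}(T_m/Z^1_{m-\varepsilon-1})\leq G_m$ alone therefore has $[T_m:Z^1_{m-\varepsilon-1}]=q^{\varepsilon+1}(q-1)$ elements, which is the desired lower bound with no case analysis at all. If you want to salvage your purely group-theoretic route, you must actually carry out the parameter analysis showing $|H_{\mathfrak{R}}|\leq 2$ for odd $q$; short of that, the argument as written does not prove the proposition.
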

\begin{proof} We assume that $\mathfrak{P}_{\infty}$ is conjugated to a rational place $\mathfrak{R}$ outside $x_0^q+x_0$.  Then $r_{\infty}$ is as large as the decomposition index of $\mathfrak{R}$ in $T_m/T_m^{G_m}$ and of course it is at most as large as the number of all rational places. 
\\[0.1cm] 
(a),(c) By Proposition \ref{6prop0} we obtain that $\mathfrak{R}$ is unramified and hence completely decomposed in $T_m/ Z_{m-\varepsilon-1}^1$. Therefore the decomposition index of $\mathfrak{R}$ is at least $[T_m:Z_{m-\varepsilon-1}^1]=q^{\varepsilon+1}(q-1)$.
\\[0.1cm]
(b) For even $q$ and $m=2$ we did not calculate the fixed field of $G_m(\mathfrak{P}_{\infty}\prod\mathfrak{P}_a)$. So we only use the fact that $\mathfrak{R}$ is completely decomposed in $T_2/Z_0$.
\end{proof}

\begin{theorem}\label{6theorem1} In nearly all cases Conjecture \ref{conjecture} is true for $m=1,2$. In even characteristic it is also true for $m=3,4$. The only exceptional case beside $q=2$ may occur for $(m,q)=(2,3)$.
\end{theorem}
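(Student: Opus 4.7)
The plan is to apply the Riemann--Hurwitz genus formula to the Galois cover $T_m/T_m^{G_m}$ and use it, in combination with Proposition \ref{6prop1}, to rule out the ``large'' alternative for the decomposition index $r_\infty$. Indeed, if $r_\infty$ attains the small lower bound of Proposition \ref{4prop1}, then the identity $|G_m| = r_\infty\cdot|G_m(\mathfrak{P}_\infty)|$ together with Theorem \ref{3theorem1} immediately produces the orders predicted by Conjecture \ref{conjecture}. So the entire task reduces to showing that in each listed case the alternative $r_\infty\geq q^{\varepsilon+1}(q-1)$ (respectively $\geq q^2(q-1)$ for even $q$, $m=2$) is incompatible with the genus of $T_m/K$.

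First I would estimate from below the contribution of the orbit of $\mathfrak{P}_\infty$ to $\deg\mathfrak{D}(T_m/T_m^{G_m})$. The inertia group at $\mathfrak{P}_\infty$ is the full decomposition group $G_m(\mathfrak{P}_\infty)\cong A^\varepsilon\rtimes\mathbb{F}_q^\times$, with wild part of order $q^\varepsilon$ and tame part of order $q-1$. Using the explicit form of the automorphisms from Theorem \ref{3theorem1}, I would read off the higher ramification filtration $(G_i)_{i\geq 0}$ by computing valuations $v_{\mathfrak{P}_\infty}(\sigma(\pi)-\pi)$ for a local uniformizer $\pi$ (for instance a suitable power of $1/x_0$ combined with the $x_i$'s), obtaining an explicit different exponent $d_{\mathfrak{P}_\infty}$. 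Since the orbit of $\mathfrak{P}_\infty$ has size $r_\infty$ and all conjugate places carry conjugate ramification data, the orbit contribution is $r_\infty\cdot d_{\mathfrak{P}_\infty}$.

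Additional contributions come from the rational places $\mathfrak{Q}_{b,m}$ supporting $x_0^q+x_0$ (which by Proposition \ref{4prop1} already lie in the $\mathfrak{P}_\infty$-orbit and are therefore counted above) and, whenever $m\geq 2\varepsilon$, from at least one further wildly ramified $G_m$-orbit supplied by Corollary \ref{6cor1}, namely the zeros of $x_{m-\varepsilon}^{q-1}+1$. Plugging these into Hurwitz,
$$2g_m-2 = |G_m|\bigl(2g(T_m^{G_m})-2\bigr)+\deg\mathfrak{D}(T_m/T_m^{G_m}),$$
and using $g(T_m^{G_m})\geq 0$ yields
$$\deg\mathfrak{D}(T_m/T_m^{G_m})\leq 2g_m-2+2|G_m|.$$
Substituting $|G_m| = r_\infty\cdot q^\varepsilon(q-1)$ with the ``large'' value of $r_\infty$ and the lower bound for the orbit contribution just computed leads to a numerical contradiction in each of the listed cases, forcing $r_\infty$ to attain its small lower bound and proving the conjecture there.

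The hard part is making the different exponent at $\mathfrak{P}_\infty$ sharp enough. For odd $q$ or $m=1$ the wild inertia is cyclic of order $q$ with a short filtration, so the estimate is essentially automatic. In even characteristic with $m\geq 2$ the wild inertia has order $q^2$ and a genuinely two-step filtration, which must be pinned down via the explicit formula $\sigma(x_m)=cx_m+a^2/(cx_{m-2})+b$ of Theorem \ref{3theorem1}; moreover for $(q,m)$ with $q$ even and $m=2,3$ Corollary \ref{6cor1} is unavailable, so the contradiction has to be extracted from the $\mathfrak{P}_\infty$-orbit alone, which is tight. This tightness is precisely why the small case $(m,q)=(2,3)$ listed in the statement resists the crude inequalities, and why $q=2$ lies outside the framework entirely (the $m=1$ norm-trace curve is already elliptic, so the above Hurwitz bookkeeping degenerates).
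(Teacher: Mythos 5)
Your overall strategy --- the Hurwitz formula for $T_m/T_m^{G_m}$ combined with the dichotomy of Proposition \ref{6prop1} to exclude the large alternative for $r_\infty$ --- is the same as the paper's, but the central inequality you propose does not close. The different exponent of $\mathfrak{P}_\infty$ in $T_m/T_m^{G_m}$ is $d_{\mathfrak{P}_\infty}=q^{\varepsilon+1}-2$, while its ramification index is $e_{\mathfrak{P}_\infty}=|G_m(\mathfrak{P}_\infty)|=q^{\varepsilon}(q-1)$, so $\delta_{\mathfrak{P}_\infty}=d_{\mathfrak{P}_\infty}/e_{\mathfrak{P}_\infty}$ is only barely larger than $1$; in particular $d_{\mathfrak{P}_\infty}<2e_{\mathfrak{P}_\infty}$ for every $q\geq 2$. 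Consequently the chain $r_\infty d_{\mathfrak{P}_\infty}\leq\deg\mathfrak{D}(T_m/T_m^{G_m})\leq 2g_m-2+2|G_m|=2g_m-2+2r_\infty e_{\mathfrak{P}_\infty}$ is satisfied by \emph{every} value of $r_\infty$ and yields no contradiction whatsoever; the places $\mathfrak{Q}_{b,m}$ lie in the same orbit and add nothing. Your assertion that for even $q$ and $m=2,3$ ``the contradiction has to be extracted from the $\mathfrak{P}_\infty$-orbit alone'' therefore cannot be carried out: no refinement of the ramification filtration at $\mathfrak{P}_\infty$ can help, since the different exponent is what it is.

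The missing idea is a case split on $g_G:=g(T_m^{G_m})$ together with an analysis of the \emph{other} ramified orbits. If $g_G\geq 1$, the term $|G_m|(2g_G-2)$ is nonnegative and one gets $2g_m-2\geq r_\infty(q^{\varepsilon+1}-2)$ directly, which is strong enough. If $g_G=0$, writing Hurwitz as $2g_m-2=|G_m|(\delta-2)$ with $\delta=\sum_{\mathfrak{P}}\delta_{\mathfrak{P}}\deg(\mathfrak{P})$ and using $g_m>0$ forces $\delta>2>\delta_{\mathfrak{P}_\infty}$, so there must exist ramified places not conjugate to $\mathfrak{P}_\infty$, and one must classify them: if such a place is wild, or tame and non-rational, or if there are at least two tame rational ones, then $\delta-2\geq\delta_{\mathfrak{P}_\infty}-1$ gives $r_\infty\leq(2g_m-2)/(q^{\varepsilon}-2)$; the residual case of a single tamely ramified rational place requires the separate estimate $e\geq q+1$ extracted from $\delta>2$. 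Corollary \ref{6cor1} supplies an unconditional extra wild orbit only for $m\geq 2\varepsilon$, so it is precisely the even-$q$ cases $m=2,3$ (and the single-tame-place sub-case) where this finer analysis is indispensable. As written, your argument proves nothing in those cases, and it also silently discards the $g_G\geq 1$ information that makes the remaining cases work.
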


\begin{proof} Let $\mathfrak{P}$ be a place of the fixed field $T_m^{G_m}$ ramifying in $T_m/T_m^{G_m}$. All its extensions have the same ramification index $e_{\mathfrak{P}}$ and different exponent $d_{\mathfrak{P}}$. The different degree of these extensions is 
$|G_m|\deg(\mathfrak{P})d_{\mathfrak{P}}/e_{\mathfrak{P}}$. So by the Hurwitz formula we get

\begin{align*} 2g_m-2 & = [T_m:T_m^{G_m}](2g_G-2) + \deg(\mathfrak{D}(T_m/T_m^{G_m})) \\
& = |G_m|(2g_G-2 + \sum\limits_{\mathfrak{P}} \delta_{\mathfrak{P}}\deg(\mathfrak{P})) \end{align*}

where $g_G$ denotes the genus of $T_m^{G_m}/K$ and $\delta_{\mathfrak{P}}=d_{\mathfrak{P}}/e_{\mathfrak{P}}$ denotes the ratio of the different exponent $d_{\mathfrak{P}}$ and ramification index $e_{\mathfrak{P}}$ of a place $\mathfrak{P}$ in $T_m^{G_m}$.  For any wildly ramifying place $\mathfrak{P}$ we have $\delta_{\mathfrak{P}}\geq 1$ and for any tamely ramifying place $\mathfrak{P}$ we have $1/2 \leq \delta_{\mathfrak{P}}< 1$. 
\\[0.1cm]
The place $\mathfrak{P}_{\infty}$ is wildly ramified with $\delta_{\mathfrak{P}_{\infty}} = \delta_{\infty}=(q^{\varepsilon +1}-2)/(q^{\varepsilon}(q-1))$. In the following we will collect upper bounds for $r_{\infty}$ in several cases.
\\[0.1cm]
\textit{Case 1:} It is $g_G\geq 1$. Then we get
$ 2g_m-2 \geq |G_m| \delta_{\infty} = r_{\infty} d_{\infty} $
and hence 
\begin{equation}\label{case1}
 r_{\infty} \leq (2g_m-2)/(q^{\varepsilon +1}-2).  
\end{equation}

\textit{Case 2:} It is $g_G=0$. Because of $g_m>0$ it follows that $\delta=\sum\nolimits_{\mathfrak{P}} \delta_{\mathfrak{P}}\deg(\mathfrak{P})) > 2$ and $\delta-\delta_{\infty}>0$ by the Hurwitz formula. 
Consequently any other place $\mathfrak{Q}$ not conjugated to $\mathfrak{P}_{\infty}$ is ramified in $T_m/T_m^{G_m}$. 
\\[0.1cm]
\textit{Case 2(a):} In this case $\mathfrak{Q}$ is either a wildly ramified place or a tamely ramified non-rational place. We also include the case that there are two tamely ramified rational places of $T_m^{G_m}$. Either way, $\delta-\delta_{\infty} \geq 1$ holds. This implies $|G_m|(\delta-2) \geq r_{\infty}(q^{\varepsilon}-2)$ and
\begin{equation}\label{case2a}
 r_{\infty} \leq (2g_m-2)/(q^{\varepsilon}-2).  
\end{equation}

\textit{Case 2(b):} In the last case $\mathfrak{Q}$ is the only tamely ramified rational place of $T_m^{G_m}$ and there are no ramified places other than $\mathfrak{P}_{\infty}$ and $\mathfrak{Q}$. So it holds $\delta=\delta_{\infty}+\delta_{\mathfrak{Q}} > 2 $ with $\delta_{\mathfrak{Q}}=(e-1)/e$. By this inequality we can estimate the ramification index $e$ of $\mathfrak{Q}$ and $e\geq q$ results for even $q$ and $e\geq q+2$ holds for odd $q$ resp. $e\geq 6$ for $q=3$. Actually $e=q$ is impossible because $\mathfrak{Q}$ is tamely ramified. Hence $e\geq q+1$ and $\delta\geq \delta_{\infty} + q/(q+1)$. For even $q$ we conclude 
$|G_m|(\delta-2) \geq |G_m|(2q^2-2q-2)/(q^2(q^2-1))$ and
\begin{equation}\label{case2b}
r_{\infty} \leq q(g_m-1)/(q^2-q-1).
\end{equation}
We omit the calculations for odd $q$ in this case since we will see that they are unnecessary.
\\[0.1cm]
Comparing the inequalities (\ref{case1}), (\ref{case2a}) and (\ref{case2b}) with the inequalities in Proposition \ref{6prop1} we can prove our hypothesis. For $m=2$ we have $2g_2-2=2q^3-2q^2-2q$ and $g_G=0$ (see Theorem \ref{5remark1}). So case $2$ holds. For odd $q$ case 2(a) actually holds by Corollary \ref{6prop2}. We get $r_{\infty}\leq 2q^2+2q+2+4/(q-2)$. This proves the hypothesis for odd $q\neq 3$ and $m=2$. For even $q$ and $m=2,3$ all cases imply our hypothesis. For even $q$ and $m=4$ case 2(b) is impossible by Corollary \ref{6prop2}, while the other cases leads to the verification of our hypothesis.
\end{proof}

\begin{remark} We queried \textsc{Magma} for the automorphisms of the decomposition tower. It seems that $Z_m/K$ has only two automorphisms namely those generated by the ''reflection'' automorphism $z_i\mapsto 1/z_{m-i}$.
 
\end{remark}

\textbf{Acknowledgement.} I am deeply grateful to Bernd Heinrich Matzat for his support of my studium and my diploma thesis. Many thanks to Florian He\ss\ for his helpful comments on this article. 

I acknowledge the support of the \textsc{Berlin Mathematical School}, which is funded by the German Science Foundation (DFG)
     as a Graduate School in the framework of the ``Excellence
     Initiative''.
\addcontentsline{toc}{chapter}{References}
\bibliographystyle {plain}
\bibliography{biblio}
\nocite{Lagemann,BezGarNGT}

\end{document}